\theoremstyle{plain}
\numberwithin{equation}{section}
\newtheorem{thm}{Theorem}[section]
\newtheorem{prop}[thm]{Proposition}
\newtheorem{lem}[thm]{Lemma}
\theoremstyle{definition}
\newtheorem{dfn}[thm]{Definition}
\newtheorem{exm}[thm]{Example}
\newtheorem{rem}[thm]{Remark}
\def\rank{\mathop{\mathrm{rank}}\nolimits}
\def\dim{\mathop{\mathrm{dim}}\nolimits}
\def\Lie{\mathop{\mathrm{Lie}}\nolimits}
\def\kutorsor{E_{\sigma}\to\Gamma(\sigma)^{\mathrm{gp}} \backslash D_{\sigma}}
\def\torsor{E_{\sigma}\to\Gamma(\sigma)^{\mathrm{gp}} \backslash D_{\sigma}}
\def\gsds{\Gamma(\sigma)^{\mathrm{gp}} \backslash D_{\sigma}}
\def\gs{\Gamma(\sigma)^{\mathrm{gp}}}
\def\GsDs{\Gamma(\sigma)^{\mathrm{gp}} \backslash D_{\sigma}}
\def\Gs{\Gamma(\sigma)^{\mathrm{gp}}}
\def\gm{\mathbb{G}_{m}}
\def\es{E_{\sigma}}
\def\Es{E_{\sigma}}
\def\CC{\mathbb{C}}
\def\QQ{\mathbb{Q}}
\def\RR{\mathbb{R}}
\def\ZZ{\mathbb{Z}}
\def\PP{\mathbb{P}}
\def\Gr{\mathrm{Gr}}
\def\fix{\mathrm{fix}}
\def\calB{\mathcal{B}}
\def\calM{\mathcal{M}}
\def\frakg{\mathfrak{g}}
\def\nn{\mathbf{n}}
\def\hh{\mathbf{h}}
\def\bs{\backslash}
\def\gp{\mathrm{gp}}
\def\hom{\mathop{\mathrm{Hom}}\nolimits}
\def\Aut{\mathop{\mathrm{Aut}}\nolimits}
\def\torus{\mathrm{torus}}
\def\toric{\mathrm{toric}}
\def\spec{\mathop{\mathrm{Spec}}\nolimits}
\def\Im{\mathop{\mathrm{Im}}\nolimits}
\def\even{\text{even}}
\def\odd{\text{odd}}
\def\l{\left}
\def\r{\right}
\newcommand{\mf}[1]{{\mathfrak{#1}}}
\newcommand{\mb}[1]{{\mathbf{#1}}}
\title[moduli of log Hodge structures, II]{On the boundary of moduli spaces\\ of log Hodge structures, II:\\ nontrivial torsors}
\author[T.~Hayama]{Tatsuki HAYAMA}
\address{Department of Mathematics, National Taiwan University, Taipei 106, Taiwan}
\curraddr{Mathematical Sciences Center, Tsinghua University, Haidian District, Beijing 100084, China}
\email{tatsuki@math.tsinghua.edu.cn}
\thanks{Supported by National Science Council of Taiwan}
\date{\today}
\subjclass[2000]{32G20.} 
\keywords{log Hodge structure; period domain; cycle space}
\begin{document}
\maketitle
\begin{abstract}
In this paper we determine when a natural torsor arising in the work \cite{ku} of Kato and Usui on partial compactification of period  domains of pure Hodge structure is trivial, and give an application to cycle spaces.
\end{abstract}
\section{Introduction}
Let $D$ be a period domain of pure Hodge structures defined by Griffiths \cite{G}.
A variation of $\ZZ$-Hodge structure over the $n$-product of punctured disk $(\Delta^*)^n$ gives the period map $(\Delta^*)^n\to \Gamma\bs D$ where $\Gamma$ is the monodromy group, i.e.\ the $\ZZ$-module generated by the monodromy transformations.
We assume the monodromy transformations are unipotent.
In this paper we treat partial compactifications of $\Gamma\bs D$ so that the period map is extended over $\Delta^n$.

In the case where $D$ is Hermitian symmetric, Ash, Mumford, Rapoport and Tai \cite{amrt} give partial compactifications of $\Gamma\bs D$ (and also give compactifications of arithmetic quotient of $D$).
Later, Kato and Usui \cite{ku} generalize toroidal partial compactifications for any period domain $D$, which is not Hermitian symmetric in general, and show these are moduli spaces of log Hodge structures. 
This partial compactification is given by using toroidal embedding associated to the cone generated by the data of the monodromy.
In fact, for generators $T_1,\ldots ,T_n$ of the monodromy group $\Gamma$, the partial compactification $\Gamma\bs D_{\sigma}$ is given by the cone $\sigma=\sum_{j=1}^n\RR_{\geq 0}N_j$ ($N_j=\log{T_j}$) in the Lie algebra.
Here a boundary point is a nilpotent orbit associated to a face of $\sigma$ (see \S \ref{def-log}). 

In the ``classical situation" (i.e. $D$ is Hermitian symmetric), $\Gamma\bs D_{\sigma}$ is an analytic space.
In contrast, for general period domains, $\Gamma\bs D_{\sigma}$ may not be an analytic space.
In fact the boundary components of the partial compactification $\Gamma\bs D_{\sigma}$ can have codimension greater than $1$ although it is $1$ in the classical situation (see Example \ref{1111}). 
This mean there can be {\it slits} on the boundary of $\Gamma\bs D_{\sigma}$. 
Kato and Usui \cite{ku} define logarithmic manifolds as a generalization of analytic spaces and state $\Gamma\bs D_{\sigma}$ is a logarithmic manifold.
 
\subsection*{}
A part of the geometric structure of $\Gamma\backslash D_{\sigma}$ is given by a torsor $E_{\sigma}\to \Gamma\backslash D_{\sigma}$ constructed in [KU] (see 3.1).
We discuss about these torsors.
Our main result is following:
\begin{thm}[Theorem \ref{no_global_sec}]
$\Es\to\Gamma\bs D_{\sigma}$ is trivial if and only if $D$ is Hermitian symmetric or $\sigma=\{0\}$.
\end{thm}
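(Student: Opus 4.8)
I would prove the two implications separately; the ``if'' part is essentially already available, and the content is in the converse. If $D$ is Hermitian symmetric, triviality is \cite[Theorem~5.6]{H}; and if $\sigma=\{0\}$ then $\gs$ is trivial, $D_\sigma=D$, $\es=D$, and the structure group of the torsor is $\{0\}$, so the torsor is tautologically trivial. It therefore remains to prove: \emph{if $\sigma\ne\{0\}$ and $D$ is not Hermitian symmetric, then $\es\to\gsds$ has no global section.}

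\emph{Reduction to a ray.} Triviality of a torsor passes to restriction to an open subspace, and for a face $\tau\preceq\sigma$ the space $\Gamma(\tau)^{\gp}\backslash D_\tau$ is an open subspace of $\gsds$ over which $\es$ is identified, up to the affine-space factor corresponding to the surjection $\sigma_{\CC}\twoheadrightarrow\tau_{\CC}$, with the torsor $E_\tau$ (functoriality of the Kato--Usui spaces in the cone, \cite{ku}). Hence a global section of $\es$ yields, after projecting off that affine factor, a global section of $E_\tau$. Since $\sigma$ is a nonzero rational polyhedral cone in the Kato--Usui sense it possesses a ray that is a face, so we may assume from now on that $\sigma=\RR_{\ge0}N$ for a single nonzero nilpotent $N$, and $\gs=\langle\exp N\rangle\cong\ZZ$.

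\emph{The torsor over the interior.} Unwinding the rank one construction, over the open part $\gs\backslash D\subset\gsds$ the torsor becomes $(\sigma_{\CC}\times D)/\gs\to\gs\backslash D$, with $\gs$ acting via $N$ on both factors; consequently a holomorphic section over the interior is the same datum as a holomorphic function $\psi\colon D\to\CC$ satisfying an equivariance $\psi(\exp(N)x)=\psi(x)+1$, i.e.\ a single-valued substitute on $D$ for the multivalued coordinate $\tfrac{1}{2\pi i}\log q$ on $\gs\backslash D$. Such $\psi$ exist over the interior; the point is whether one can be chosen so that the associated point of the ambient $\mathrm{toric}_{\sigma}\times\check D$ — with $e^{2\pi i\psi(x)}$ in the toric direction and $\exp(\psi(x)N)\cdot x$ in the $\check D$ direction — tends, as $x$ approaches a boundary nilpotent orbit, to a point of $\es$: with the toric coordinate going to the boundary and the $\check D$-component converging to a generator of a nilpotent orbit.

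\emph{The obstruction, and the expected main difficulty.} Near a boundary point the nilpotent orbit theorem puts $D$ in its standard local normal form, with a coordinate $\tau$ ($\im\tau\gg0$) together with transverse coordinates ranging over $\bigoplus_{k\ge1}\frakg^{-k,k}$ for a reference filtration $F_0$; the structural fact to use is that $D$ is Hermitian symmetric exactly when $\frakg^{-k,k}=0$ for all $k\ge2$. When some $\frakg^{-p,p}\ne0$ with $p\ge2$, I expect the equivariance $\psi(\exp(N)x)=\psi(x)+1$ to be irreconcilable with the requirement that the $\check D$-component stay a nilpotent-orbit generator along a curve in the $\frakg^{-p,p}$-direction: a would-be section cannot push the toric coordinate to the boundary and simultaneously keep the $\check D$-component admissible, so that no neighbourhood of such a boundary point carries a section. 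This is precisely the phenomenon exhibited for one example in \cite[Proposition~5.8]{H}. The principal obstacle, and the genuinely new input, is to extract from the general local model the finite-dimensional obstruction attached to the top nonzero component $\frakg^{-p,p}$ and to show it is always nonzero when $p\ge2$; once this is done, the failure of a local section at a single boundary point already contradicts triviality and finishes the proof.
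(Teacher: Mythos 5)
There is a genuine gap: your argument stops exactly where the hard work begins. After the (correct) easy directions and the reduction to showing that no global section exists when $D$ is not Hermitian symmetric and $\sigma\neq\{0\}$, your plan is to locate the obstruction in a local normal form near a boundary point, via the top nonzero piece $\frakg^{-p,p}$ with $p\geq 2$ — but you explicitly do not carry this out (``I expect \dots'', ``the genuinely new input is to extract \dots and to show it is always nonzero''). That deferred step \emph{is} the theorem; as written the proposal is a strategy sketch, not a proof. Moreover, your intermediate claim that a holomorphic $\psi\colon D\to\CC$ with $\psi(\exp(N)x)=\psi(x)+1$ ``exists over the interior'' is false in precisely the situation at hand: such a $\psi$ is necessarily non-constant, and the key input of the paper (Theorem \ref{const}, deduced from \cite[Theorem 4.4.3]{FHW}) is that \emph{every} holomorphic function on a non-Hermitian-symmetric period domain is constant. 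So the obstruction is not a delicate boundary phenomenon in $\frakg^{-p,p}$; it is already a global function-theoretic fact about $D$ itself.

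The paper's actual argument is short and global, and you should compare it with your outline. A trivialization restricts over $\Gs\bs D$ to a section of $\Es\cap(\torus_\sigma\times\check D)\to\Gs\bs D$ in the category of analytic spaces; composing with the projection to $\torus_\sigma\cong(\CC^*)^l$ and pulling back along $D\to\Gs\bs D$ yields a holomorphic map $\Phi\colon D\to(\CC^*)^l$. By Theorem \ref{const}, $\Phi$ is constant. On the other hand, for $N$ in the relative interior of $\sigma$, \cite[Proposition 3.4.4]{ku} gives $\exp(iyN)F\to(\sigma,\exp(\sigma_\CC)F)$ in $\GsDs$ as $y\to\infty$, which forces $\Phi(\exp(iyN)F)\to 0$ in $\toric_\sigma$ — a contradiction. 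Note also that this uses $N$ in the relative interior of the given cone directly, so your reduction to a ray (which itself needs a justification of the claimed compatibility of $E_\sigma$ with $E_\tau$ up to an affine factor) is unnecessary. To repair your proposal you would need to either import Theorem \ref{const} or actually produce the nonvanishing local obstruction you postulate; without one of these the proof is incomplete.
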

In \cite{H}, we proved this result in the case where $D$ is Hermitian symmetric.
In this paper, we treat the case where $D$ is not hermitian symmetric. 
To prove Theorem 1.1, we use a result from the book of Fels, Huckleberry and Wolf \cite{FHW} to show that, unless D is Hermitian symmetric, any holomorphic function on $D$ is constant.

We used a different strategy to prove the non-triviality of the torsors for the one-example in \cite[Proposition 5.8]{H}.
Generalizing this approach, we give another proof of the non-triviality result in Proposition \ref{no_loc_sec}.
This second proof is stronger than the first one since it gives a non-triviality on some open sets around a boundary point.
We use cycle spaces and the SL(2)-orbit theorem there.
Some property of cycle spaces induces the non-triviality.

In conclusion, the properties of cycle spaces induce the above non-triviality results.
Cycle spaces can have a significance on the study of moduli spaces of log Hodge structures in wider framework. 
In fact, the property of cycle spaces of Lemma \ref{fix} induces our later work \cite{H2}. 
On the other hand, Green, Griffiths and Kerr \cite{GGK2} have introduced Mumford-Tate domains as a generalization of period domains, and they also indicate the importance of cycle space concerning about the cohomology groups of Mumford-Tate domains.
Moreover Kerr and Pearlstein \cite{KP} have constructed partial compactifications of Mumford-Tate domains in the same manner as Kato and Usui \cite{ku}.
We expect that our results fit into the case for the boundaries of the Mumford-Tate domains.
 
\subsection*{}
This paper is organized as follows: In \S 2.1, we review period domains of Hodge structures.
In \S 2.2 and \S 2.3, we discuss cycles spaces of period  domains.  In \S 3, we review
moduli spaces of polarized log Hodge structures.  In \S 4, we reformulate our priori results 
of [H] in terms of cycle spaces.

\section{Cycle spaces of period domains}
\subsection{Polarized Hodge structures and period domains}\label{pd}
We recall the definition of polarized Hodge structures and of period domains.
A Hodge structure of weight $w$ with Hodge numbers $(h^{p,q})_{p,q}$ is a pair $(H_{\ZZ},F)$ consisting of a free $\ZZ$-module of rank $\sum_{p,q}h^{p,q}$ and of a decreasing filtration on $H_{\CC}:=H_{\ZZ}\otimes \CC$ satisfying the following conditions:
\begin{enumerate}
\item[(H1)] $\dim_{\CC} F^p=\sum_{r\geq p}h^{r,w-r}\quad \text{for all $p$;}$\label{check1}
\item[(H2)] $H_{\CC}=\bigoplus _{p+q=w} H^{p,q}\quad(H^{p,q}:=F^p\cap \overline{F^{w-p}}).$
\end{enumerate}

A polarization $\langle\; ,\;\rangle$ for a Hodge structure $(H_{\ZZ},F)$ of weight $w$ is a non-degenerate bilinear form on $H_{\QQ}:=H\otimes \QQ$, symmetric if $w$ is even and skew-symmetric if $w$ is odd, satisfying the following conditions:
\begin{enumerate}
\item[(P1)] $\langle F^p , F^q \rangle=0\quad \text{for $p+q>w$;}$\label{check2}
\item[(P2)] $i^{p-q}\langle v,\bar{v}\rangle >0$ for $0\neq v\in H^{p,q}$.
\end{enumerate}
 
We fix a polarized Hodge structure $(H_{\ZZ,0},F_0,\langle\; ,\;\rangle_0)$ of weight $w$ with Hodge numbers $(h^{p,q})_{p,q}$.
We define the set of all Hodge structures of this type
$$D:=\left\{\begin{array}{l|l}F&\begin{array}{r}(H_{\ZZ,0} , F ,\langle\; ,\;\rangle_0)\text{ is a polarized Hodge structure}\\ \text{ of weight $w$ with Hodge numbers $(h^{p,q})_{p,q}$}\end{array}\end{array}\right\}.$$
$D$ is called a period domain.
Moreover, we have the flag manifold 
$$\check{D}:=\left\{\begin{array}{l|l}F&\begin{array}{r}(H_{\ZZ,0} , F ,\langle\; ,\;\rangle_0)\text{ satisfies the conditions}\\ \text{ (H1), (H2) and (P1)}\end{array}\end{array}\right\}.$$
$\check D$ is called the compact dual of $D$, and contains $D$ as an open subset.
Let $G_A:=\Aut{(H_{A,0},\langle\; ,\;\rangle_0)}$.
Then, $G_{\RR}$ acts transitively on $D$ and $G_{\CC}$ acts transitively on $\check D$.
$G_{\RR}$ is a classical group such that
\begin{align*}
G_{\RR}\cong\begin{cases}
Sp(h,\RR)& \text{if $w$ is odd,}\\
SO(h_{\text{odd}},h_{\text{even}})& \text{if $w$ is even,}\\
\end{cases}
\end{align*}
where $2h=\rank{H_{\ZZ}}$, $Sp(h,\RR)$ is the ($2h\times 2h$)-matrix symplectic group, $h_{\text{odd}}=\sum_{\text{p:odd}}h^{p,q}$ and $h_{\text{even}}=\sum_{\text{p:even}}h^{p,q}$.

Let $\mathfrak{g}_{A}=\Lie{G_{A}}$ ($A=\RR,\CC$).
We then have the decomposition $\mathfrak{g}_{\CC}=\bigoplus_{p+q=0} \mathfrak{g}^{p,q}$ given by
$$\mathfrak{g}^{p,q}=\left\{ \alpha\in \mathfrak{g}_{\CC} \;|\; \alpha H^{p',q'}\subset H^{p+p',q+q'}\text{ for }p',q'\in\ZZ \right\}$$
with respect to a Hodge decomposition $H_{\CC}=\bigoplus H^{p,q}$.
\begin{exm}[Upper half plane]\label{SL(2)}
Let us consider the case where the Hodge numbers $h^{1,0}=h^{0,1}=1$, $0$ otherwise.
Then corresponding classifying space $D$ is the upper-half plane $\{z\in\CC\; |\; \Im{z}>0\}$, and $\check{D}\cong\PP^1$.
$G_{A}\cong SL(2,A)$ $(A=\ZZ,\RR,\CC)$ where the action of $SL(2,\CC)$ on $\check{D}$ is given by the linear fractional transformation.
Here $\frakg_{\RR}=\frak{sl}(2,\RR)$ is generated by
\begin{align*}
\nn _- =\begin{pmatrix}
0&1\\0&0
\end{pmatrix},\quad
\hh =\begin{pmatrix}
-1&0\\ 0&1
\end{pmatrix},\quad
\nn_+ =
\begin{pmatrix}
0&0\\ 1&0
\end{pmatrix}.
\end{align*}
We call the triple the $sl_2$-triple.
The $sl_2$-triple satisfies the following conditions:
\begin{align*}
[\nn _{+} ,\nn_{-}]= \hh ,\quad [\nn_{\pm},\hh ]=\pm 2\nn_{\pm}. 
\end{align*}
The Hodge decomposition of $\frakg_{\CC}$ with respect to $i\in D$ is given by
\begin{align}\label{sl2-triple}
\frakg^{-1,1}=\CC (i\nn_- -\hh +i\nn_+ ), \quad \frakg^{0,0}=\CC (\nn_- -\nn_+),\quad \frakg^{1,-1}=\overline{\frakg^{-1,1}}.
\end{align}
\end{exm}

Returning to the general case, the isotropy subgroup $L$ of $G_{\RR}$ at $F_0$ is given by
\begin{align*}
L&=\{ g\in G_{\RR}\; |\; gF_0=F_0\}\\
&\cong
\begin{cases}
\prod_{p\leq m}U(h^{p,q})& \text{if $w=2m+1$,}\\
\prod_{p<m}U(h^{p,q})\times SO(h^{m,m})& \text{if $w=2m$.}\\
\end{cases}
\end{align*}
They are compact subgroups of $G_{\RR}$ but not maximal compact unless $D$ is Hermitian symmetric.
We define
$$H^{\text{even}}=\bigoplus_{p : \text{even}}H^{p,q}_0,\quad H^{\text{odd}}=\bigoplus_{p : \text{odd}}H^{p,q}_0$$
where $H_{\CC}=\bigoplus H^{p,q}_0$ is the Hodge decomposition for $F_0$.
Here 
\begin{align*}
K&=\{ g\in G_{\RR}\;|\; gH^{\text{even}}=H^{\text{even}} \}\\
&\cong
\begin{cases}
U(h)& \text{if $w$ is odd,}\\
S(O(h_{\text{odd}})\times O(h_{\text{even}}))& \text{if $w$ is even}\\
\end{cases}
\end{align*}
is the maximal subgroup containing $L$ (cf. \cite[Example 4.3.6]{CMP}, \cite[Lemma 2.8]{LS}).
By the connectivity of $G_{\RR}$, $D$ is connected if $w$ is odd, $D$ has two connected component if $w$ is even and $h_{\text{even}},h_{\text{odd}}>0$.
Here $D$ is Hermitian symmetric if and only if the isotropy subgroup is a maximally compact subgroup, i.e., one of the following is satisfied (cf. \cite[(1.8)]{U2}):
\begin{enumerate}
\item $w=2m+1, \;h^{p,q}=0 \text{ unless }p=m+1,m;$
\item $w=2m, \;h^{p,q}=1 \text{ for }p=m+1,m-1,\;h^{m,m}\text{ is arbitary, }h^{p,q}=0\text{ otherwise; }$
\item $w=2m, \;h^{p,q}=1 \text{ for }p=m+a,m+a-1,m-a,m-a+1\text{ for some }a\geq 2,\;h^{p,q}=0\text{ otherwise. }$
\end{enumerate} 
In the case (1), $D$ is a Hermitian symmetric domain of type {\rm III}.
In the case (2) or (3), an irreducible component of $D$ is a Hermitian symmetric domain of type {\rm IV}.
We call the cases (1)--(3) the classical situation.
\begin{exm}[The weight $1$ case]\label{weight-1}
We give an example of period domains of weight $1$ and $h^{1,0}=h^{0,1}=n$, $0$ otherwise.
This case is corresponding to the case (1) above.
Now $G_{A}=Sp(n,A)$ ($A=\ZZ,\RR,\CC$) and
\begin{align}\label{siegel}
D&= \left\{\begin{array}{l|l}W\text{ :$\langle \; ,\;\rangle$-isotropic $n$-planes}&\begin{array}{r} W> 0 \text{ for }i\langle \bullet ,\bar{\bullet} \rangle\end{array}\end{array}\right\}\\
&\cong\{Z\in \CC^{n\times n}\; |\; I-ZZ^*> 0\} \nonumber\\
&\cong Sp(n,\RR )/U(n)\nonumber
\end{align}
where ``$>0$" means positive definite.
$D$ is called the Siegel space of degree $n$.
See \cite{n} for detail.
\end{exm}
\subsection{Cycle spaces of period domains}\label{cs-def}
Let $D_0$ be a irreducible component including $F_0$ of a period domain $D$.
Then the identity component $G_{\RR ,0}$ acts on $D_0$ transitively.
Let $K_0$ be the maximal compact subgroup of $G_{\RR, 0}$ containing the isotropy subgroup $L_0$ at $F_0$.
We then have the real analytic projection
$$p:D_0\cong G_{\RR,0}/L_0\to G_{\RR,0}/K_0.$$
If $w$ is odd, 
\begin{align}\label{odd-weight}
G_{\RR,0}/K_0=Sp(h,\RR) /U(h)
\end{align}
is the Hermitian symmetric domain of Example \ref{weight-1}.
If $w$ is even, 
$$G_{\RR,0}/K_0=SO_0(h_{\odd},h_{\even})/SO(h_{\odd})\times SO(h_{\even})$$
is a symmetric space which does not have any complex structure unless the projection $p$ is trivial (cf. \cite[Example 4.3.6]{CMP}).
Moreover, $G_{\RR,0}/K_0$ is written as the set of all $H_{\even}$ for $F\in D_0$ (cf. \cite[Lemma 2.10]{LS}) and the projection $p$ is given by 
$$F\mapsto H_{\even}.$$
Then it is not holomorphic even if $w$ is odd. 
Therefore we have the following theorem applying \cite[Theorem 4.4.3]{FHW} to $D_0$:
\begin{thm}\label{const}
If $D_0$ is not Hermitian symmetric domain (i.e., $L_0 \neq K_0$) any holomorphic function on $D_0$ is constant.
\end{thm}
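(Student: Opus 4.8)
The plan is to realize $D_0$ as a flag domain in the sense of \cite{FHW} and then run the standard argument that a holomorphic function is constant on each cycle, the essential input being the cycle-connectivity provided by \cite[Theorem 4.4.3]{FHW}.

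First I would set up $D_0$ as an open orbit. Writing $\check D = G_{\CC}/Q$ with $Q$ the stabilizer of $F_0$ in $G_{\CC}$ (a complex symplectic or special orthogonal group, hence connected semisimple), $Q$ is a parabolic subgroup and $\check D$ is a rational projective homogeneous variety, while $D_0 = G_{\RR,0}\cdot F_0$ is an open $G_{\RR,0}$-orbit in $\check D$. Because the isotropy group $L_0$ of $G_{\RR,0}$ at $F_0$ is compact, $D_0$ is a measurable open orbit (it carries a $G_{\RR,0}$-invariant volume), so its base cycle is the compact connected complex submanifold $C_0 := K_0\cdot F_0 \cong K_0/L_0$ of $\check D$, and $C_0 \subset D_0$. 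The key point is then a dimension count: $\dim_{\CC} C_0 = \dim_{\CC}(K_0/L_0) > 0$ as soon as $L_0 \subsetneq K_0$ (using that $K_0$ is connected), which is exactly the non-Hermitian hypothesis. Concretely, the holomorphic tangent space of $C_0$ at $F_0$ is the image of $\frakk_{\CC}$ in $\frakg_{\CC}/\mathfrak q$, where $\mathfrak q\subset\frakg_{\CC}$ is the stabilizer of $F_0$ (a sum of nonnegative Hodge pieces $\frakg^{p,-p}$), and one checks this image is nonzero precisely outside the classical cases recalled in \S\ref{pd}.

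With $\dim_{\CC} C_0 > 0$ in hand, \cite[Theorem 4.4.3]{FHW} applies to $D_0$ and yields the cycle-connectivity we need: any two points $x, y \in D_0$ lie on a finite chain $C_1, \dots, C_m$ of cycles — each a $G_{\CC}$-translate of $C_0$ contained in $D_0$ — with $C_j \cap C_{j+1} \neq \emptyset$ for all $j$. Now let $f \in \calO(D_0)$. Each cycle $C$ occurring in such a chain is a compact connected complex submanifold of $\check D$ contained in $D_0$, so $f|_C$ is constant, being holomorphic on a compact connected complex manifold. Since consecutive cycles meet, these constants agree all along the chain, so $f(x) = f(y)$; as $x, y$ were arbitrary, $f$ is constant on $D_0$.

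The identification of $C_0$ and the dimension count are essentially the classical-versus-non-classical dichotomy already recorded in \S\ref{pd}, so that part is routine. The real work — and the only genuinely delicate step — is the appeal to \cite{FHW}: one must check that $D_0$ meets the standing hypotheses there (a flag domain of a connected complex semisimple group, measurability of the open orbit, and positivity of the base cycle dimension), the last being exactly where the assumption $L_0 \neq K_0$ is used in an essential way; indeed for a Hermitian symmetric $D_0$ the cycles collapse to points, the chain argument is vacuous, and the conclusion genuinely fails.
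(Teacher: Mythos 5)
There is a genuine gap at the pivotal step: you deduce ``cycle connectivity'' of $D_0$ from the single fact that $\dim_{\CC} C_0=\dim_{\CC}(K_0/L_0)>0$, i.e.\ from $L_0\neq K_0$, and you attribute this to \cite[Theorem 4.4.3]{FHW}. That implication is false for general flag domains, and it is not what that theorem says. Theorem 4.4.3 of \cite{FHW} is the holomorphic-reduction statement: every $f\in\calO(D_0)$ factors as $f=g\circ\pi$ through the bounded symmetric domain $D(G_{\RR,0},F_0)$ subordinate to $D_0$ (\cite[Definition 4.4.1]{FHW}). To conclude that $f$ is constant one must show that this subordinate domain is a point, equivalently that $D_0$ admits no holomorphic fibration onto a positive-dimensional Hermitian symmetric quotient of $G_{\RR,0}$. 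Positivity of $\dim C_0$ does not rule this out: for example, the flag domain of pairs $(\ell,W)$ with $W$ a positive Lagrangian $n$-plane and $\ell\subset W$ a line has base cycle $\PP^{n-1}$ of positive dimension, yet it fibers holomorphically over the Siegel space $Sp(n,\RR)/U(n)$, carries many non-constant holomorphic functions, and is not cycle connected (every cycle lies in a single fiber). So the hypothesis you feed into the chain-of-cycles argument is not established by the dimension count, and the chain argument as written does not go through.

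The missing work is exactly what the paper's proof supplies: one checks that the projection $D_0\cong G_{\RR,0}/L_0\to G_{\RR,0}/K_0$ is not holomorphic for any invariant complex structure on the target --- in the even-weight non-classical case because $G_{\RR,0}/K_0$ carries no invariant complex structure at all, and in the odd-weight case because the map $F\mapsto H^{\even}$ (or $H^{\odd}$) fails to be holomorphic outside the classical range of Hodge numbers. Only then is $D(G_{\RR,0},F_0)$ a point, and the factorization of \cite[Theorem 4.4.3]{FHW} forces constancy. Your chain-of-cycles mechanism is a legitimate alternative ending once cycle connectivity is actually known, but establishing it likewise requires verifying that $D_0$ is not of holomorphic type, not merely that $L_0\neq K_0$; with that verification inserted, your route and the paper's coincide in substance.
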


Now the fiber of $p(F_0)$ is the $K_0$-orbit $C_0=K_0\cdot F_0$.
We call $C_0$ the base cycle of $F_0$.
By \cite[Theorem 4.3.1]{FHW}, $K_{0, \CC}$ acts on $C_0$ transitively, and then $C_0=K_{0, \CC}\cdot F_0$ is a compact submanifold of $D_0$.
\begin{prop}[{\cite[Lemma 5.1.3]{FHW}}]
Let $J=\{g\in G_{\CC}\;|\; gC_0=C_0\}$.
Then $J$ is a closed complex subgroup of $G_{\CC}$.
The quotient manifold $\mathcal{M}_{\check{D}}=\{gC_0\; |\; g\in G_{\CC}\}\cong G_{\CC}/J$ has a natural structure of $G_{\CC}$-homogeneous complex manifold, and the subset $\mathcal\{gC_0\;|\; g\in G_{\CC}\text{ and }gC_0\subset D\}$ is open in $\calM_{\check{D}}$. 
\end{prop}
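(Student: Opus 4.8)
The plan is to realize $\calM_{\check{D}}$ as an orbit of $G_{\CC}$ inside a space of cycles on which $G_{\CC}$ acts holomorphically, and then to read off the three assertions from general facts about holomorphic group actions together with properness of projection off a compact factor. Recall from the discussion preceding the statement that $C_0=K_{0,\CC}\cdot F_0$ is a compact complex submanifold of the flag manifold $\check{D}$; since $\check{D}$ is a generalized flag variety, hence projective, and $K_{0,\CC}$ is an algebraic subgroup of $G_{\CC}$, the orbit $C_0$ is in fact a smooth projective subvariety of $\check{D}$, of some dimension $d$ and degree $e$. Let $\mca{C}=\mathrm{Chow}_{d,e}(\check{D})$ be the corresponding Chow variety and $[C_0]\in\mca{C}$ the point it determines; the natural algebraic action of $G_{\CC}$ on $\check{D}$ induces an algebraic, in particular holomorphic, action of $G_{\CC}$ on $\mca{C}$, for which $g\cdot[C_0]=[gC_0]$ is the translate of the cycle $C_0$ by $g$. (If one prefers to keep $\check{D}$ an arbitrary compact complex manifold, one replaces $\mca{C}$ by Barlet's cycle space $\mca{C}_d(\check{D})$ and invokes Barlet's theorem that $G_{\CC}$ acts on it by biholomorphisms.)

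Next I would identify $J$ with the isotropy subgroup of this action at $[C_0]$. As the stabilizer of a point under a holomorphic (resp.\ algebraic) action, $J$ is a closed complex (resp.\ Zariski-closed) subgroup of $G_{\CC}$; concretely its Lie algebra is $\frakj=\{X\in\frakg_{\CC}\mid X_{\check{D}}|_{C_0}\text{ is tangent to }C_0\}$, namely the kernel of the $\CC$-linear map $\frakg_{\CC}\to H^0(C_0,N_{C_0/\check{D}})$ sending $X$ to the normal component along $C_0$ of the holomorphic vector field it generates, so $\frakj$ is a complex subalgebra, confirming that $J$ is a complex Lie subgroup. The orbit $G_{\CC}\cdot[C_0]\subset\mca{C}$ is then a locally closed smooth submanifold, the orbit map descends to a $G_{\CC}$-equivariant biholomorphism $G_{\CC}/J\xrightarrow{\ \sim\ }G_{\CC}\cdot[C_0]$, and since the points of this orbit are exactly the translates $gC_0$, the orbit is $\calM_{\check{D}}$; this endows it with the asserted structure of $G_{\CC}$-homogeneous complex manifold.

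For the last assertion I would argue directly rather than through the topology of $\mca{C}$. The evaluation map $\mathrm{ev}\colon G_{\CC}\times C_0\to\check{D}$, $(g,c)\mapsto gc$, is holomorphic and $\check{D}\setminus D$ is closed in $\check{D}$, so $Z:=\mathrm{ev}^{-1}(\check{D}\setminus D)$ is closed in $G_{\CC}\times C_0$. Because $C_0$ is compact, the projection $G_{\CC}\times C_0\to G_{\CC}$ is proper, hence closed, so its image $\{g\in G_{\CC}\mid gC_0\cap(\check{D}\setminus D)\neq\emptyset\}=\{g\mid gC_0\not\subset D\}$ is closed in $G_{\CC}$; the complement $\{g\mid gC_0\subset D\}$ is therefore open. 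It is a union of cosets $gJ$ (since $gC_0$ depends only on $gJ$), hence descends to an open subset of $G_{\CC}/J=\calM_{\check{D}}$, which is precisely $\{gC_0\mid g\in G_{\CC},\ gC_0\subset D\}$.

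The only genuinely nontrivial input is the \emph{holomorphy of the induced action of $G_{\CC}$ on the cycle space} — that translating a cycle depends holomorphically on the group element in cycle-space coordinates. In the projective setting this is classical, since the Chow form of $gC_0$ is obtained from that of $C_0$ by a polynomial substitution in the entries of $g$; in the general compact-complex setting it is Barlet's theorem on the action of a complex Lie group on $\mca{C}_d(\check{D})$. Granting this, the closedness and complexness of $J$, the identification $\calM_{\check{D}}\cong G_{\CC}/J$ as a homogeneous complex manifold, and the openness of the $D$-locus are all formal, so I expect essentially all the content to sit in that single point.
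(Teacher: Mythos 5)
Your argument is correct, but note that the paper itself offers no proof of this proposition: it is quoted verbatim from Fels--Huckleberry--Wolf (Lemma 5.1.3), and your proof is essentially the standard one given there --- realize $\calM_{\check{D}}$ as a $G_{\CC}$-orbit in the Barlet/Chow cycle space on which $G_{\CC}$ acts holomorphically, identify $J$ as the isotropy group (closed and complex since its Lie algebra is the kernel of the $\CC$-linear map to sections of the normal bundle), and deduce openness of the $D$-locus from properness of the projection $G_{\CC}\times C_0\to G_{\CC}$. You correctly isolate the one nontrivial input, namely holomorphy of the induced action on the cycle space; everything else is formal, exactly as you say.
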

The topological component of $C_0$ in $\{gC_0\;|\; g\in G_{\CC}\text{ and }gC_0\subset D_0\}$ is called the cycle space of $D_0$.
We denote the cycle space of $D_0$ by $\calM_{D_0}$.
If $D_0$ is Hermitian symmetric, the projection $p$ is trivial, therefore $\calM_{D_0}=D_0$.
\subsection{Cycle spaces for odd-weight cases}\label{cs-exm}
We describe cycle spaces explicitly in the odd-weight case when $D_0$ is not Hermitian symmetric according to \cite[5.5B]{FHW}.
In this case $D=D_0$.
For a base point $F_0\in D$ we define
\begin{align*}
f^p_{\text{even}}=\sum_{\substack{r\geq p,\\ r\text{: even}}}h^{r,s},\quad f^p_{\text{odd}}= \sum_{\substack{r\geq p,\\ r\text{: odd}}}h^{r,s}.
\end{align*}
Let $V$ and $W$ be $\langle \; ,\;\rangle$-isotropic subspaces, and let
$$C_{V,W}=\{ F\in \check{D} |\;\dim{(F^p\cap V)}=f^p_{\even},\dim{(F^p\cap W)}=f^p_{\odd}\}.$$
Here $gC_{V,W}=C_{gV,gW}$ for $g\in G_{\CC}$ by the definition.
Now 
\begin{align*}
&F^p_0\cap H_{\even}=\bigoplus_{\substack{r\geq p,\\ r\text{: even}}}H^{r,s}\\
&gF^p_0\cap H_{\even}=g(F^p_0\cap H_{\even})=\bigoplus_{\substack{r\geq p,\\ r\text{: even}}}gH^{r,s}
\end{align*}
for $g\in K$.
Then $C_0=C_{H^{\even},H^{\odd}}$.

By (\ref{odd-weight}), $G_{\RR}/K$ is isomorphic to the Siegel space $\calB$.
In this case, the cycle space $\calM_{D}$ is described as follows:
\begin{prop}\label{cs}
\begin{align*}
\calM_{D}=\{C_{V,W}|\; V< 0\text{ and }W> 0\text{ for }i^w\langle \bullet ,\bar{\bullet} \rangle \}\cong \calB \times \bar{\calB}
\end{align*}
where $w$ is the weight.
\end{prop}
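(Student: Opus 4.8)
The plan is to translate the description of $\calM_D$ into a question about Lagrangian subspaces of $(H_\CC,\langle\;,\;\rangle)$, and then to decide which cycles lie in $D$ by inspecting the Hodge--Riemann bilinear relations.

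First I would make $\calM_{\check{D}}$ explicit. Since $w$ is odd, $h_{\even}=h_{\odd}=h$, so $H^{\even}_0$ and $H^{\odd}_0$ are a transverse pair of maximal $\langle\;,\;\rangle$-isotropic subspaces, i.e.\ transverse Lagrangians; and $G_\CC=Sp(h,\CC)$ acts transitively on all such pairs (Witt's theorem), so the $G_\CC$-orbit of $C_0=C_{H^{\even}_0,H^{\odd}_0}$ consists of exactly the $C_{V,W}$ with $(V,W)$ a transverse Lagrangian pair. A direct computation (here one uses that the base cycle $C_0$ is positive dimensional) shows the stabilizer $J$ of $C_0$ in $G_\CC$ is $K_{0,\CC}=\{g\in G_\CC\mid gH^{\even}_0=H^{\even}_0,\,gH^{\odd}_0=H^{\odd}_0\}$, i.e.\ $(V,W)\mapsto C_{V,W}$ is injective. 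Hence $gC_0\mapsto(gH^{\even}_0,gH^{\odd}_0)$ identifies $\calM_{\check{D}}$ with the set of transverse Lagrangian pairs, and the question becomes: for which $(V,W)$ is $C_{V,W}\subset D$?

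The key assertion to prove is that, writing $h(u,v)=i^w\langle u,\bar v\rangle$ for the associated Hermitian form, $C_{V,W}\subset D$ exactly when $h|_V\ll0$ and $h|_W\gg0$. Any $F$ in $C_{V,W}$ has $F^p=(F^p\cap V)\oplus(F^p\cap W)$ with $\dim(F^p\cap V)=f^p_{\even}$ and $\dim(F^p\cap W)=f^p_{\odd}$; in particular $F^w\subset W$. Since $F$ already lies in $\check{D}$, ``$F\in D$'' is just the positivity (P2), which for odd $w$ says that $h$ is positive definite on $H^{\odd}(F)=\bigoplus_{p\text{ odd}}H^{p,q}(F)$ and negative definite on $H^{\even}(F)$. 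For necessity: every $0\neq v\in W$ lies in $F^w=H^{w,0}(F)$ for a suitable $F\in C_{V,W}$, so $C_{V,W}\subset D$ forces $h(v,v)=i^w\langle v,\bar v\rangle>0$, whence $h|_W\gg0$; the inequality $h|_V\ll0$ comes out of the analogous consideration of the even graded pieces of points of $C_{V,W}$ (given $v\in V$ with $h(v,v)\ge 0$, one exhibits a point of $C_{V,W}$ outside $D$). For sufficiency I would express each Hodge piece $H^{p,q}(F)=F^p\cap\overline{F^{w-p}}$ of a point $F\in C_{V,W}$ in terms of the four flags $F^\bullet\cap V$, $F^\bullet\cap W$, $\overline{F^\bullet\cap V}$, $\overline{F^\bullet\cap W}$ and check the signs directly; this is the computation carried out in \cite[5.5B]{FHW}. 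The subtle point is that a cycle $C_{V,W}$ from this locus could a priori meet $\partial D$ instead of staying inside $D$: this is excluded because the ``positivity locus'' $\{(V,W)\mid h|_V\ll0,\,h|_W\gg0\}$ is connected and contains the image of $G_\RR\cdot C_0\subset\{\,gC_0\subset D\,\}$, so --- using that $\{gC_0\subset D\}$ is open ($C_0$ compact, $D$ open in $\check{D}$) and that each $C_{V,W}$ is connected --- every $C_{V,W}$ in the positivity locus lies in the same open $G_\RR$-orbit as $C_0$, which is $D$.

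It then remains to assemble. A positive-definite and a negative-definite Lagrangian are automatically transverse, so the positivity locus decouples as a product: writing $\calB$ for the space of Lagrangians on which $h$ is positive definite --- this is a copy of the weight-$1$ period domain $Sp(h,\RR)/U(h)$ --- and $\bar{\calB}$ for the space of negative-definite Lagrangians (an anti-holomorphic copy of $\calB$, via $W\mapsto\bar W$), the positivity locus is the open connected complex submanifold $\bar{\calB}\times\calB$ of $\calM_{\check{D}}$, and it contains $C_0$. Hence it is a connected component of $\{gC_0\subset D\}$; as $\calM_D$ is the component of $C_0$, we get $\calM_D=\bar{\calB}\times\calB\cong\calB\times\bar{\calB}$. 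I expect the main difficulty to be the sufficiency half of the key assertion --- the bookkeeping of how complex conjugation interacts with the splitting $H_\CC=V\oplus W$ along a point of $C_{V,W}$, together with the check that no such cycle degenerates onto $\partial D$.
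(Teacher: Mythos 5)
Your proposal is correct in outline, but it takes a noticeably more hands-on route than the paper. The paper's proof is essentially an assembly of three cited results from \cite{FHW}: Lemma 5.4.1 gives the sandwich $G_{\RR}H^{\even}\times G_{\RR}H^{\odd}\subset G_{\CC}(H^{\even},H^{\odd})\subset G_{\CC}H^{\even}\times G_{\CC}H^{\odd}$ together with the identification of the two real factors with $\bar{\calB}$ and $\calB$, Proposition 5.4.3 gives injectivity of $G_{\CC}/K_{\CC}\to\calM_{\check{D}}$ \emph{on} $\calB\times\bar{\calB}$ (not a computation of the full stabilizer $J$), and Theorem 5.5.1 gives the equality $\pi(\calB\times\bar{\calB})=\calM_D$. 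You instead unpack the content: Witt's theorem to identify $\calM_{\check{D}}$ with transverse Lagrangian pairs, the Hodge--Riemann relation (P2) to prove necessity of the definiteness conditions by placing an arbitrary $v\in W$ in a top odd Hodge piece of some $F\in C_{V,W}$, and the sign bookkeeping of \cite[5.5B]{FHW} for sufficiency. What your version buys is transparency about \emph{why} the answer is $\calB\times\bar{\calB}$; what the paper's version buys is brevity and the avoidance of the stabilizer computation.

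Two soft spots. First, your claim that $J=K_{0,\CC}$ is stronger than needed and is false when $D$ is Hermitian symmetric (then $C_0$ is a point, $J$ is parabolic, and $(V,W)\mapsto C_{V,W}$ forgets one of the two Lagrangians); you do flag the positive-dimensionality hypothesis, but the safer statement --- and the one the paper actually uses --- is only injectivity of $\pi$ on the positivity locus. Second, the connectedness/openness argument you offer to rule out cycles meeting $\check{D}\setminus D$ does not by itself close the gap: a connected open set containing an open subset need not be contained in it, so you still need that $\{gC_0\subset D\}$ is relatively closed in the positivity locus, which is exactly the pointwise sign verification of \cite[5.5B]{FHW}. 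That computation is therefore not optional, and your own closing sentence correctly identifies it as the real content of the sufficiency half.
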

\begin{proof}
Now the $G_{\RR}$-orbit $G_{\RR}H^{\even}$ is isomorphic to $\calB$.
Then $G_{\RR}H^{\odd}$ is the complex conjugate $\bar{\calB}$.
Since $\calB$ (resp. $\bar{\calB}$) is an open subset of the flag manifold $G_{\CC}H^{\even}$ (resp. $G_{\CC}H^{\odd}$), we have $\calB\times \bar{\calB} \subset G_{\CC}H^{\even}\times G_{\CC}H^{\odd}$.
For $(H^{\even},H^{\odd})\in G_{\CC}H^{\even}\times G_{\CC}H^{\odd}$, the $G_{\CC}$-orbit $G_{\CC}(H^{\even},H^{\odd})$ includes $\calB\times \bar{\calB}$ by \cite[Lemma 5.4.1]{FHW}.
Now the isotropy subgroup of $G_{\CC}$ at $(H^{\even},H^{\odd})$ is $K_{\CC}$.
Then we have  
 $G_{\CC}/K_{\CC}\supset \calB\times\bar{\calB}$.

Since $\calM_{\check{D}}\cong G_{\CC}/J$ and $K_{\CC}\subset J$, we have the projection
$$\pi : G_{\CC}/K_{\CC}\to \calM_{\check{D}};\quad g\pmod{K_{\CC}}\mapsto gC_0.$$
By \cite[Proposition 5.4.3]{FHW}, $\pi$ is injective on $\calB\times \bar{\calB}$.
Moreover, by \cite[Theorem 5.5.1]{FHW}, $\pi(\calB\times \bar{\calB})=\calM_D\subset \calM_{\check{D}}$.
\end{proof}
\section{Moduli spaces of polarized log Hodge structures}
In this section, we review the construction of moduli spaces of log Hodge structures and state the fundamental properties following \cite{ku} in \S \ref{def-log}.
We state the main result in \S \ref{main-sec}

\subsection{Construction and fundamental properties}\label{def-log}
We call $\sigma\subset \frakg_{\RR}$ a nilpotent cone if it satisfies the following conditions:
\begin{enumerate}
\item $\sigma$ is a closed cone generated by finitely many elements of $\frakg_{\QQ}$;
\item $N\in \sigma$ is a nilpotent as an endmorphism of $H_{\RR}$;
\item $NN'=N'N$ for any $N,N'\in \sigma$.
\end{enumerate} 
For $A=\RR,\CC$, we denote by $\sigma_{A}$ the $A$-linear span of $\sigma$ in $\mf{g}_A$.
\begin{dfn}\label{nilp}
Let $\sigma=\sum_{j=1}^n\RR_{\geq 0}N_j$ be a nilpotent cone and $F\in\check{D}$. 
Then 
$$\exp{(\sigma_{\CC})}F\subset\check{D}$$
 is called a $\sigma$-nilpotent orbit if it satisfies the following conditions:
\begin{enumerate}
\item $\exp{(\sum_j iy_jN_j)}F\in D$ for all $y_j\gg 0$.
\item $NF^p\subset F^{p-1}$ for all $p\in\ZZ$ and for all $N\in \sigma$.
\end{enumerate}
\end{dfn}
We define the set of nilpotent orbits
$$D_{\sigma}:=\{(\tau,Z)|\; \tau \text{: face of }\sigma,\; Z\;\text{is a}\;\tau \text{-nilpotent orbit}\}.$$
For a nilpotent cone $\sigma$, we have the abelian group and the monoid
\begin{align*}
\Gs=\exp{(\sigma_{\RR})}\cap G_{\ZZ},\quad \Gamma(\sigma)=\exp{(\sigma)}\cap G_{\ZZ}.
\end{align*}

We define a geometric structure on $\Gs\bs D_{\sigma}$.
First we review some basic facts about toric varieties.
The monoid $\Gamma(\sigma)$ defines the toric varieties
\begin{align*}
&\toric_{\sigma}:=\spec(\CC[\Gamma(\sigma)^{\vee}])_{\mathrm{an}}\cong \hom{(\Gamma(\sigma)^{\vee},\CC)},\\
&\torus_{\sigma}:=\spec(\CC[{\Gamma(\sigma)}^{\vee \mathrm{gp}} ])_\mathrm{an}\cong\hom{({\Gamma(\sigma)}^{\vee \mathrm{gp}},\gm)}\cong \gm\otimes {\Gamma(\sigma)}^{\mathrm{gp}},
\end{align*}
where $\CC$ in the right hand side of the first line is regarded as a semigroup via multiplication and above homomorphisms are of semigroups. 
As in \cite[\S 2.1]{F}, we choose for a face $\tau$ of $\sigma$ the distinguished point 
$$x_{\tau}:\Gamma(\sigma)^{\vee}\to\CC;\quad u\mapsto \begin{cases}1&\text{if }u\in \Gamma(\tau)^{\perp },\\0&\text{otherwise.}\end{cases}$$
Then $\toric_{\sigma}$ can be decomposed by torus orbits as 
$$\toric_{\sigma}=\bigsqcup_{\tau\text{: face of }\sigma}(\torus_{\sigma}\cdot x_{\tau}).$$
For $q\in\toric_{\sigma}$, there exists the face $\sigma(q)$ of $\sigma$ such that $q\in\torus_{\sigma}\cdot x_{\sigma(q)}$.
By a surjective homomorphism
\begin{equation*}
\mb{e}:\sigma_{\CC}\rightarrow \torus_{\sigma}\cong\gm\otimes {\Gamma(\sigma)}^{\mathrm{gp}};\;w\log{(\gamma)}\mapsto \exp{(2\pi\sqrt{-1}w)}\otimes \gamma ,
\end{equation*}
$q$ can be written as $q=\mb{e}(z)\cdot x_{\sigma (q)}$.
Here $\ker{(\mb{e})}=\log{(\Gamma(\sigma)^{\text{gp}})}$ and $z$ is determined uniquely modulo $\log{(\gs)}+\sigma(q)_{\CC}$.

We define the analytic space $\check{E}_{\sigma}:=\toric_{\sigma}\times \check{D}$ and the subset
\begin{equation*}
E_{\sigma}:=\left\{\begin{array}{l|l}(q,F)\in\check{E}_{\sigma}&\begin{array}{r}\exp{(\sigma(q)_{\CC})}\exp{(z)}F\text{ is $\sigma (q)$-nilpotent orbit}\\ \text{ where $q=\mb{e}(z)\cdot x_{\sigma (q)}$.}\end{array}\end{array}\right\}.
\end{equation*}
Here we endow $\es$ with the {\it strong topology} (\cite[\S 3.1]{ku}) in $\check{E}_{\sigma}$. 
We then define the canonical map
\begin{align*}\label{es}
\pi: &E_{\sigma}\to \Gamma(\sigma)^{\mathrm{gp}}\backslash D_{\sigma},\\
&(q,F)\mapsto (\sigma(q),\exp{(\sigma(q)_{\CC})}\exp{(z)}F)\mod{\Gamma(\sigma)^{\mathrm{gp}}}.
\end{align*}
We endow $\Gamma(\sigma)^{\gp}\backslash D_{\sigma}$ with the strongest topology for which the maps $\pi$ are continuous.
\cite{ku} gives the geometric properties of $\es$, $\gsds$ and $\torsor$ by using the language {\it log manifolds} (\cite[\S 3.5]{ku}):  
\begin{thm}[{\cite[Theorem A]{ku}}]\label{KU_A}
\begin{enumerate}
\item $E_{\sigma}$ and $\GsDs$ are logarithmic manifolds.
\item We have the $\sigma_{\CC}$-action on $E_{\sigma}$ over $\gsds$ by
$$a\cdot(q,F):=(\mb{e}(a)q,\exp{(-a)}F)\quad(a\in\sigma_{\CC},\;(q,F)\in \es),$$
and $\kutorsor$ is a $\sigma_{\CC}$-torsor in the category of logarithmic manifold.
\end{enumerate}
\end{thm}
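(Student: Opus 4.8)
The plan is to reduce both assertions to a local description of $\es$ inside the log smooth ambient space $\check{E}_{\sigma}=\toric_{\sigma}\times\check{D}$, and then to deduce the statement for $\GsDs$ from the torsor structure. Here $\check{E}_{\sigma}$, with the log structure pulled back from the toric boundary of $\toric_{\sigma}$, is log smooth over $\CC$ --- a toric variety is log smooth and $\check{D}$ is a complex manifold --- so every open subset of it is log smooth as well. Fix a point $(q_{0},F_{0})\in\es$, set $\tau_{0}=\sigma(q_{0})$, and write $\sigma=\sum_{j=1}^{n}\RR_{\geq 0}N_{j}$; the locus in $\toric_{\sigma}$ where $N_{j}\in\sigma(q)$ is the zero set of a monomial function $q_{j}$, and $q_{0}$ lies in $\{q_{j}=0\}$ exactly for $N_{j}\in\tau_{0}$.

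For part (1) applied to $\es$, I would show that on a small neighbourhood $Z\subseteq\check{E}_{\sigma}$ of $(q_{0},F_{0})$ the subset $\es\cap Z$ is cut out by finitely many sections of the sheaf of log differentials $\omega^{1}_{Z}$, in the sense of the notion of log manifold in \S 3.5 of \cite{ku}: a defining form is required to vanish after restriction to the log point of a given point, so the condition it imposes is vacuous wherever the log structure is trivial, i.e.\ wherever all the $q_{j}$ are invertible. The defining forms are, for each $j$, the forms $g\,d\log q_{j}$ where $g$ runs over a finite set of holomorphic functions near $F_{0}$ in $\check{D}$ cutting out the Griffiths transversality locus $\{F: N_{j}F^{p}\subseteq F^{p-1}\ \text{for all }p\}$ --- condition (2) of Definition \ref{nilp} for $N=N_{j}$. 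Since $g\,d\log q_{j}$ vanishes at a point precisely when $q_{j}\neq0$ or $g=0$ there, the common zero set imposes exactly ``$N_{j}F^{p}\subseteq F^{p-1}$ for every $j$ with $q_{j}=0$'', which is Griffiths transversality for the face $\sigma(q)$; and $\es\cap Z$ is then given the structure sheaf and log structure restricted from $Z$. The remaining requirement, positivity --- condition (1) of Definition \ref{nilp} --- is open, so after shrinking $Z$ it holds identically on $Z$ and contributes no defining form. One must also check that the strong topology on $\es$ agrees near $(q_{0},F_{0})$ with the topology underlying this cut-out subspace. Making the openness of positivity precise --- in particular that ``$\exp(\sum_{j}iy_{j}N_{j}+z)F\in D$ for all $y_{j}\gg0$'' is stable under small perturbations of $(z,F)$, uniformly as one passes from $\tau_{0}$ to its subfaces --- together with the matching of topologies, is the point requiring genuine analytic input: Schmid's nilpotent orbit theorem and the $SL(2)$-orbit theorem applied to the data of $\sigma$.

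For part (2), note first that $a\cdot(q,F)=(\mb{e}(a)q,\exp(-a)F)$ does define an action of the abelian group $\sigma_{\CC}$ on $\es$ over $\GsDs$: by commutativity of $\sigma_{\CC}$, if $q=\mb{e}(z)\cdot x_{\sigma(q)}$ then $\mb{e}(a)q=\mb{e}(z+a)\cdot x_{\sigma(q)}$ (the torus orbit, hence $\sigma(q)$, is unchanged) and $\exp(z+a)\exp(-a)F=\exp(z)F$, so both membership in $\es$ and the image under $\pi$ are unchanged, and compatibility with the structure sheaves and log structures is immediate. The action is free: if $a\cdot(q,F)=(q,F)$ then $\mb{e}(a)q=q$ forces $a\in\sigma(q)_{\CC}+\log(\Gs)$, and writing $a=v+\log\gamma$ with $v\in\sigma(q)_{\CC}$, $\gamma\in\Gs$, the equation $\exp(-a)F=F$ says $\gamma\exp(v)$ fixes $F$; since it commutes with $\exp(z)$ it then fixes the nilpotent orbit base point $\exp(z)F$ and all the points $\exp(\sum_{j}iy_{j}N_{j})\exp(z)F\in D$, and one deduces $v=0,\ \gamma=1$, exactly as in the interior case --- where it is just that a unipotent element of $\Gs$ fixing a point of $D$ lies in a compact isotropy group, hence is trivial --- the boundary case being controlled by the $SL(2)$-orbit attached to the orbit. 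Transitivity on the fibres of $\pi$ is a direct computation: if $\pi(q,F)=\pi(q',F')$ then $\sigma(q)=\sigma(q')=:\tau$, and after absorbing an element of $\Gs$ into the choice of $z'$ one has $\exp(z')F'=\exp(u)\exp(z)F$ for some $u\in\tau_{\CC}$, whence $a:=z'-z-u$ satisfies $a\cdot(q,F)=(q',F')$. Finally, to pass from ``free with transitive fibres'' to ``$\sigma_{\CC}$-torsor in the category of log manifolds'', one constructs on a neighbourhood $U$ of any point of $\GsDs$ a holomorphic section $s:U\to\es$ of $\pi$ --- which exists because, by the local normal form of part (1), a neighbourhood of a nilpotent orbit in $\GsDs$ is realized as a slice transverse to the $\sigma_{\CC}$-orbits --- and then $\sigma_{\CC}\times U\to\es|_{U}$, $(a,u)\mapsto a\cdot s(u)$, is an isomorphism of log manifolds over $U$ (bijective by freeness and transitivity, a morphism both ways by the formulas above). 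Restricting this to $\{0\}\times U$ exhibits $\GsDs$ locally as a sub-log-manifold of $\es$, which completes part (1) for $\GsDs$.

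The main obstacle is the analytic core of part (1): showing, with the uniformity needed, that the nilpotent orbit condition is open and that the strong topology matches the subspace topology near $(q_{0},F_{0})$. This is where the fine asymptotic analysis --- Schmid's nilpotent orbit theorem and the $SL(2)$-orbit theorem --- cannot be avoided, and it is also what makes the conclusion a statement about log manifolds rather than analytic spaces: the ``slits'' of the introduction are exactly the points where, as some $q_{j}\to0$, the fibre of $\es$ drops onto the Griffiths transversality subvariety $\{g=0\}$, and the $d\log q_{j}$ factors in the defining forms record this and obstruct analyticity. Granted this and the rigidity statement behind freeness, the rest of part (2) is formal.
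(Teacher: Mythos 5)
First, a point of comparison: the paper offers no proof of this statement at all. It is quoted verbatim as Theorem A of Kato--Usui \cite{ku}, whose proof occupies a large part of that monograph (the $SL(2)$-orbit construction, the spaces $D_{SL(2)}$ and $D_{BS}$, and the analysis of the strong topology). So your proposal cannot be measured against an argument in this paper; it can only be measured against \cite{ku}, and by that standard it is a reasonable roadmap of the architecture but not a proof. The places you yourself flag as ``requiring genuine analytic input'' --- that the positivity condition (1) of Definition \ref{nilp} is open in the strong topology, that the strong topology on $\es$ matches the cut-out structure, and the rigidity behind freeness at boundary points --- are not technical checks to be supplied later; they are the theorem. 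Everything else in your sketch (the $g\,d\log q_j$ defining forms, the transitivity computation, the formal torsor axioms) is the easy shell around that core.

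Two concrete inaccuracies. First, when you say positivity ``is open, so after shrinking $Z$ it holds identically on $Z$ and contributes no defining form'': if $Z$ is shrunk in the analytic topology of $\check{E}_{\sigma}$ this is false --- points of the Griffiths-transversality locus arbitrarily close to $(q_0,F_0)$ can fail positivity, which is exactly why $\GsDs$ has slits and fails to be an analytic space. The shrinking must be done in the strong topology, and proving that $\es$ is strong-open in the log-form zero locus is the content of Schmid's and Cattani--Kaplan--Schmid's asymptotic analysis as packaged in \cite{ku}; your sentence quietly assumes the conclusion. Second, your construction of local sections of $\pi$ appeals to ``the local normal form of part (1)'' realizing a neighbourhood in $\GsDs$ as a slice transverse to the $\sigma_{\CC}$-orbits; but $\GsDs$ has no a priori structure until the torsor is trivialized, so this is close to circular --- in \cite{ku} the local sections are built explicitly on $\es$ first and the structure on the quotient is defined by descent. (A smaller point: the locus $N_j\in\sigma(q)$ is the zero set of a single monomial only for simplicial cones; for a general fs nilpotent cone it is cut out by several.) None of this makes your outline wrong in spirit, but as written it defers precisely the nontrivial content.
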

Log manifolds are roughly analytic spaces with {\it slits}.
A typical example of log manifold is 
$$\CC^2\supset\{(x,y)\; |\;x=0\Rightarrow y=0\}$$
which is defined by the log differential 1-form $y d\log{x}$ of the log analytic space $\CC^2$.

Moreover \cite{ku} defines {\it polarized log Hodge structures} (\cite[\S 2.4]{ku}), and they show $\Gamma(\sigma)^{\gp}\backslash D_{\sigma}$ is a fine moduli space of polarized log Hodge structures (\cite[Theorem B]{ku}).
  
In the classical situation, $\gsds$ is just a toroidal partial compactification and the boundary is of codimension $1$ (see \cite{n}).
However, the codimension may be greater than $1$ in the non-classical situation.
\begin{exm}[The (1,1,1,1)-case]\label{1111}
Nilpotent orbits in the case where the Hodge numbers are $h^{3,0}=h^{0,3}=1$ and $h^{1,2}=h^{2,1}=1$, $0$ otherwise (we call it the (1,1,1,1)-case) are classified by \cite[\S 12.3]{ku} or \cite{GGK}.
In this case $D\cong Sp(2,\RR)/(U(1)\times U(1))$ and $\dim{D}=4$.
Here $D$ is not Hermitian symmetric space.
All possible nilpotent cones are of rank $1$.
For a nilpotent orbit $(\RR_{\geq 0} N, \exp{(\CC N)} F)$, we have the limiting mixed Hodge structure $(W(N), F)$ by \cite{s} twisting $W(N)$.
Here $(W(N),F)$ is one of the following types:
\begin{center}
\begin{tabular}{cc}
Type-I: $N^2=0,\;\dim{(\Im{N})}=1$.
&Type-II: $N^2=0,\;\dim{(\Im{N})}=2$.\\

$
\xymatrix{
&\stackrel{(2,2)}{\bullet}\ar@{->}[dd]^N& \\ \stackrel{(3,0)}{\bullet}& & \stackrel{(0,3)}{\bullet} \\ &\stackrel{(1,1)}{\bullet}&
}$&
$\xymatrix{
\stackrel{(3,1)}{\bullet}\ar@{->}[dd]^N&& \stackrel{(1,3)}{\bullet}\ar@{->}[dd]^N\\ \\\stackrel{(2,0)}{\bullet}&& \stackrel{(0,2)}{\bullet} 
}$\\
Type-III: $N^3\neq 0,N^4=0$.&
Dimensions of boundaries\\
$\xymatrix{
\stackrel{(3,3)}{\bullet}\ar@{->}[d]^N \\ \stackrel{(2,2)}{\bullet}\ar@{->}[d]^N\\ \stackrel{(1,1)}{\bullet} \ar@{->}[d]^N\\ \stackrel{(0,0)}{\bullet}
}$&
\begin{tabular}[c]{|c|c|}
\hline
&$\dim{(D_{\sigma}- D)}$\\ \hline
Type-I&$2$\\ \hline
Type-II&$1$\\ \hline
Type-III&$1$\\ \hline
\end{tabular}
\end{tabular}
\end{center}



Geometrically type-I or type-III degeneration occurs in the quintic-mirror family, and type-II degeneration occurs in the Borcea-Voisin mirror family (see \cite[Part III. A]{GGK}, \cite{U}). 
\end{exm}
\subsection{Whether the torsors are trivial}\label{main-sec}
By Theorem \ref{KU_A}, we have the torsor $\torsor$ for a period domain $D$ and a nilpotent cone $\sigma$.
In \cite{H}, we showed the triviality of torsors in the classical situation.
We show a non-triviality of the torsors in the non-classical situation by using the fact that any holomorphic functions on $D$ is constant in the non-classical case (Theorem \ref{const}).
\begin{thm}\label{no_global_sec}
Let $D$ be a period domain (for pure Hodge structures) and let $(\sigma, Z)$ be nilpotent orbit.
Then $\torsor$ is trivial if and only if $D$ is Hermitian symmetric or $\sigma=\{0\}$.
\end{thm}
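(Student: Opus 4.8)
The plan is to prove both directions separately, with the ``if'' direction being essentially known and the ``only if'' direction being the substantive content. First I would dispose of the trivial cases: if $\sigma=\{0\}$, then $\toric_{\sigma}$ is a point, $E_{\sigma}=D$, $\GsDs = D$, and the torsor is the trivial $0$-dimensional torsor; and if $D$ is Hermitian symmetric (the classical situation of the three types listed in \S\ref{pd}), then triviality was established in \cite[Theorem 5.6]{H}. So the main work is to show that if $D$ is \emph{not} Hermitian symmetric and $\sigma\neq\{0\}$, then $\torsor$ is nontrivial.

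The strategy for the nontriviality is to show that a global section cannot exist, by reducing it to the nonexistence of nonconstant holomorphic functions (Theorem \ref{const}). A $\sigma_{\CC}$-torsor $E_\sigma \to \GsDs$ in the category of log manifolds is trivial if and only if it admits a global section, equivalently if $H^0(\GsDs, \calO(E_\sigma))$ contains a section splitting the torsor structure; concretely, a splitting is a holomorphic map $\GsDs \to E_\sigma$ over the base. The key step is to restrict attention to the open part: $D$ itself sits inside $\GsDs$ as the open stratum (the stratum where $\sigma(q) = \{0\}$), and over $D$ the torsor $E_\sigma \to \GsDs$ restricts to the trivial torsor $D\times\sigma_{\CC}\to D$ via the exponential coordinates, since over this stratum $E_\sigma$ is identified with $\{(q,F): q \in \torus_\sigma, F\in D\}$ and the $\sigma_{\CC}$-action is free and transitive on fibres in the obvious linear way. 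Hence a global section of $E_\sigma\to\GsDs$ restricts to a global section of $D\times \sigma_\CC \to D$, i.e.\ to a holomorphic map $D\to\sigma_\CC$, equivalently to finitely many holomorphic functions $D\to\CC$ (the components with respect to a $\QQ$-basis $N_1,\dots,N_n$ of $\sigma_\CC$). By Theorem \ref{const} applied to the irreducible component $D_0$ of $D$ (which is not Hermitian symmetric precisely because $D$ is not), each such function is constant on $D_0$; by the transitivity of $G_\RR$ permuting components and equivariance, it is constant on all of $D$. So the candidate section is constant along $D$.

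The final step is to derive a contradiction from this constancy: a section that is constant along the open dense stratum $D$ must, by continuity in the strong topology (the section being continuous, even holomorphic, on $\GsDs$ and $D$ being dense — here one uses that $\sigma\neq\{0\}$ so that boundary strata are nonempty and $D$ is dense in $\GsDs$), extend to a constant map $\GsDs \to E_\sigma$ whose image is a single point. But a constant map cannot be a section of a torsor with nontrivial structure group $\sigma_\CC \neq 0$ over a base of positive ``transverse'' dimension along $D$ — more precisely, the composite $\GsDs \to E_\sigma \to \GsDs$ would have to be the identity while the first map is constant, forcing $\GsDs$ to be a point, which it is not since it contains the positive-dimensional $D$. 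This contradiction shows no global section exists, so the torsor is nontrivial. I expect the main obstacle to be the bookkeeping in this last step: one must be careful that ``a holomorphic map $D\to\sigma_\CC$ arising from a section is constant'' genuinely contradicts the section property, rather than merely showing the section is locally constant on $D$; the cleanest way is to observe that the pullback of $E_\sigma$ along the inclusion $D\hookrightarrow\GsDs$ is the trivial bundle and its sections are exactly maps $D\to\sigma_\CC$, and a torsor section pulls back to a bundle section, which by Theorem \ref{const} is constant, contradicting the fact that the pulled-back section must still trivialize the $\sigma_\CC$-action fibrewise over each point of $D$ in a way compatible with the nontrivial monodromy/gluing across the boundary. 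Handling the even-weight case, where $D$ may be disconnected, requires only the extra remark that $G_\RR$ acts transitively on the set of components so constancy propagates.
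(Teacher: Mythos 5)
Your setup is correct and matches the paper up to a point: the ``if'' direction via \cite[Theorem 5.6]{H} and the $\sigma=\{0\}$ case are right, and the idea of restricting a putative section to the open stratum over $\Gs\bs D$, where $E_\sigma\cap(\torus_\sigma\times\check D)\to\Gs\bs D$ is an honest analytic torsor, and then invoking Theorem \ref{const} to conclude that the resulting holomorphic fibre coordinate is constant, is exactly the paper's first move (the paper phrases it as a holomorphic map $\Phi:D\to\torus_\sigma\cong(\CC^*)^l$ rather than $D\to\sigma_\CC$, which is the correct target after the $\Gs$-quotient, but this is cosmetic).

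The final step, however, has a genuine gap. You claim that constancy of the trivializing function already yields a contradiction because ``a constant map cannot be a section of a torsor'': this conflates the section $s:\GsDs\to E_\sigma$ with its fibre coordinate. A \emph{constant} fibre coordinate $c\in\sigma_\CC$ defines a perfectly good holomorphic section $x\mapsto c\cdot(\text{base point over }x)$ of the trivial torsor over $D$; the section itself is injective, not constant, and the composite with $\pi$ is the identity with no contradiction. Constancy over the open stratum alone is therefore harmless, and your appeal to ``nontrivial monodromy/gluing across the boundary'' does not identify any actual obstruction. What the paper uses, and what your argument is missing, is the behaviour of the section \emph{at the boundary}: since $\sigma\neq\{0\}$ there is a boundary point $(\sigma,\exp(\sigma_\CC)F)\in\GsDs$, and by \cite[Proposition 3.4.4]{ku} one has $\exp(iyN)F\to(\sigma,\exp(\sigma_\CC)F)$ as $y\to\infty$ for $N$ in the relative interior of $\sigma$. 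A global section sends this boundary point to some $(q,F')$ with $\sigma(q)=\sigma$, i.e.\ with $q$ in the closed torus orbit of $\toric_\sigma$, so by continuity $\Phi(\exp(iyN)F)$ must degenerate to $0\in\toric_\sigma$, which a nonzero constant in $(\CC^*)^l$ cannot do. Without this boundary limit argument the proof does not close.
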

\begin{proof}
By \cite[Theorem 5.6]{H} the torsors are trivial if $D$ is a Hermitian symmetric space.
If $\sigma=\{ 0\}$, the torsor is just the identity map $D\to D$, therefore the torsor is trivial.
Thus it is suffice to show that the torsor is non-trivial if $D$ is not Hermitian symmetric.

We assume that $\pi: \torsor$ is trivial for a non-Hermitian symmetric space $D$ and for a nilpotent cone $\sigma\neq \{0\}$.
Now 
$$\pi^{-1}(\Gs\bs D)=\Es\cap (\torus_{\sigma}\times \check{D})$$
by the definition of $\Es$, and this is a complex analytic space since $\torus_{\sigma}\times \check{D}$ has trivial log structure.
Thus the restriction of the torsor to $\pi^{-1}(\Gs\bs D)$ is a torsor in the category of complex analytic spaces, and we have a section $\gsds\to \Es$ and a holomorphic map $\Phi:D\to (\CC^*)^l$ such that we have the following diagram
\begin{align}
\xymatrix{
&\GsDs\ar@{}[d]|{\rotatebox{90}{$\subset$}}\ar@{->}[r]&\Es\ar@{}[d]|{\rotatebox{90}{$\subset$}}&\\
\Phi: D\ar@{->}[r]^{\text{quot.}} &\Gs\bs D\ar@{->}[r]&\Es\cap (\torus_{\sigma}\times \check{D})\ar@{->}[r]^{\text{proj.}}&\torus_{\sigma}\cong (\CC ^*)^l
}
\end{align}
where $l=\rank{\Gs}$.

For a nilpotent $N$ in the relative interior of $\sigma$, we have
\begin{align}\label{conv_nilp}
\lim_{y\to\infty}\exp{(iyN)}F = (\sigma ,\exp{(\sigma_{\CC})}F)
\end{align}
through $D\to\Gs\bs D\hookrightarrow \GsDs$ by \cite[Proposition 3.4.4]{ku}.
Then $\Phi(\exp{(iyN)}F)$ has to converge to $0\in \toric_{\sigma}$ as $y\to \infty$.
This contradicts Theorem \ref{const}.
\end{proof}

\section{Remarks on \cite{H}}\label{remH}
We showed the non-triviality of the torsor in \cite[Proposition 5.8]{H} using a different method from Theorem \ref{no_global_sec}.
We formulate it by using SL(2)-orbit theorem and cycle spaces and give a second proof of the non-triviality result for a special case.
While this second proof requires some special conditions, the result is stronger than the first one since it says there exists no section over certain open sets around a boundary point.
A property of some cycle spaces induces this result.
We observe the property of cycle spaces in the (1,1,1,1)-case explicitly in \S \ref{1111sec}.
In this section we assume $D$ is not Hermitian symmetric.
\subsection{SL(2)-orbits and cycle spaces}\label{sl2-cs}
Let $(\RR_{\geq 0}N,\exp{(\CC N)}F)$ be a nilpotent orbit.
By \cite{s} there exists the monodromy weight filtration $W(N)$ and $(W(N),F)$ is a mixed Hodge structure.
By \cite[Proposition 2.20]{CKS} there exists the $\RR$-split mixed Hodge structure $(W(N),\hat{F})$ associated to it.
We then have the Deligne decomposition $H_{\CC}=\bigoplus_{p,q}I^{p,q}$ for $(W(N),\hat{F})$ where
\begin{align*}
\hat{F}^p=\bigoplus_{r\leq p}I^{r,s},\quad W(N)_k=\bigoplus_{r+s=k}I^{r,s},\quad \overline{I^{p,q}}=I^{q,p}.
\end{align*}
By the SL(2)-orbit theorem (\cite[Theorem 5.13]{s}, \cite[\S 3]{CKS}), there exists the Lie group homomorphism $\rho: SL(2,\CC)\to G_{\CC}$ defined over $\RR$ and the holomorphic map $\phi:\PP^1\to \check{D}$ satisfying the following conditions:
\begin{enumerate}\label{sl2}
\item[(S1)] $\rho(g)\phi(z)=\phi(gz)$;
\item[(S2)] $\phi(0)=\hat{F}$;
\item[(S3)] $\rho_*(\nn_-)=N$;
\item[(S4)] $Hv=(p+q-w)v$ for $v\in I^{p,q}$ where $\rho_*(\hh)=H$;
\item[(S5)] $\rho_*:\mathfrak{sl}(2,\CC)\to\frakg_{\CC}$ is a $(0,0)$-morphism of Hodge structure where $\frakg_{\RR}$ (resp. $\mathfrak{sl}(2,\RR)$) has a Hodge structure of weight $0$ relative to $\phi(i)$ (resp. $i$),
\end{enumerate}
where $\{\nn_-,\hh,\nn_+,\}$ is the $sl_2$-triple (Example \ref{SL(2)}).

Let $F_0=\phi(i)$ be a base point of $D$.
We write
\begin{align}\label{def_X}
\rho_*(\nn_+)=N^+,\quad X=\frac{1}{2}(iN-H+iN^+).
\end{align}
Then $X\in\frakg^{-1,1}_{0}$ by (S5) and (\ref{sl2-triple}) where $\frakg_0^{-1,1}$ is the $(-1,1)$-component of Hodge decomposition of $\frakg_{\CC}$ with respect to $F_0$.
By (S1) we have
\begin{align*}
\exp{(zX)} \phi (i)=\phi \l(\frac{1+z}{1-z}i\r),
\end{align*}
therefore
\begin{align}\label{X-N}
\exp{\l(\frac{y}{2+y}X\r)}\phi (i)=\phi ((1+y)i)=\exp{(iyN)}\phi (i).
\end{align}

\begin{lem}
Let $C_0$ be the base cycle of $F_0$.
If $\dim{(\Im{N})}=1$, then both (A) and (B) hold:
\begin{enumerate}\label{fix}
\item[(A)] There exists $F_{\fix}\in C_0$ such that $\exp{(X)}F_{\fix}=F_{\fix}$;
\item[(B)] $\exp{(zX)}C_0\subset D$ (i.e., $\exp{(zX)}C_0\in\calM_{D}$) for $|z|<1$.
\end{enumerate}
\end{lem}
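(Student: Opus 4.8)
The plan is to analyze the action of the one-parameter subgroup $\exp(zX)$ on the base cycle $C_0 = K_{0,\CC} \cdot F_0$, using the representation theory of the $\mathfrak{sl}_2$-triple $\{N, H, N^+\}$ together with the hypothesis $\dim(\Im N) = 1$. Since $X = \frac{1}{2}(iN - H + iN^+) \in \frakg_0^{-1,1}$, the element $\exp(zX)$ moves $F_0$ along the disc $\phi\!\left(\frac{1+z}{1-z}i\right)$ by \eqref{X-N}, but to understand its action on all of $C_0$ I would first identify the subalgebra of $\frakg_{\CC}$ generated by $X$ and its conjugates, and locate $X$ inside the image $\rho_*(\mathfrak{sl}(2,\CC))$. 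The condition $\dim(\Im N) = 1$ forces the $\mathfrak{sl}_2$-representation on $H_{\CC}$ to have a very restricted isotypic decomposition: only one copy of an irreducible summand of dimension $\geq 2$ can occur (and in the weight-$3$, $(1,1,1,1)$ Type-I picture this is the single length-two string joining the $(2,2)$ and $(1,1)$ pieces), with everything else being trivial $\mathfrak{sl}_2$-modules. This is the structural input that makes both (A) and (B) tractable.

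For part (A), I would produce $F_{\fix}$ explicitly as a fixed point of $\exp(X)$ inside $C_0$. The natural candidate is obtained by transporting $F_0 = \phi(i)$ by the Cayley-type element that conjugates the relevant disc automorphism to a rotation, or equivalently by exploiting that $\exp(zX)$ acting on the relevant two-dimensional $\mathfrak{sl}_2$-string is (after passing to the associated $SL(2)$) a parabolic one-parameter subgroup fixing a point of $\PP^1$; I would check that the corresponding filtration lies in $C_0$ by verifying the dimension conditions $\dim(F^p \cap H^{\even}) = f^p_{\even}$, $\dim(F^p \cap H^{\odd}) = f^p_{\odd}$ from \S\ref{cs-exm}, which hold because $X$ preserves the $K_{0,\CC}$-orbit structure when $\Im N$ is one-dimensional. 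Concretely, since $C_0 = K_{0,\CC} F_0$ and $K_{0,\CC}$ is the stabilizer of $H^{\even}$ (or of the Deligne pieces in the split case), I would show $X$ normalizes $\frakk_{0,\CC}$ modulo a nilpotent piece that has a fixed vector, giving a fixed point in the orbit.

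For part (B), the claim $\exp(zX)C_0 \subset D$ for $|z| < 1$ amounts to checking that the polarization form stays positive-definite in the appropriate signs along the deformation. Here I would use that $C_0 = K_{0,\CC} F_0$ and $\exp(zX)$ commutes with a large part of $K_{0,\CC}$ (again because $\Im N$ is one-dimensional, so $X$ lives in a small corner of $\frakg_{\CC}$), reducing the positivity check to the single nontrivial $\mathfrak{sl}_2$-string, where it becomes the classical statement that the $SL(2,\CC)$-translate of the base cycle of the upper half-plane (a point) stays inside the disc for $|z|<1$ — essentially \eqref{X-N} extended from $F_0$ to the whole cycle. I would then invoke Proposition \ref{cs}'s description $\calM_D \cong \calB \times \bar\calB$ in the odd-weight case to conclude $\exp(zX)C_0 \in \calM_D$.

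The main obstacle I anticipate is part (B): verifying that the positivity conditions (P2) for the polarization persist on the \emph{entire} cycle $\exp(zX)C_0$, not just at the distinguished translate of $F_0$, for all $|z| < 1$. This requires genuinely controlling how $\exp(zX)$ interacts with the $K_{0,\CC}$-directions transverse to the $\mathfrak{sl}_2$-string, and showing these directions are either fixed or harmless. The hypothesis $\dim(\Im N) = 1$ is exactly what should collapse this to a one-variable problem, but making the reduction precise — identifying which $(p,q)$-pieces of $H_{\CC}$ are moved by $X$ and bounding the form on them — is where the real work lies; I would organize it by decomposing $H_{\CC}$ under $\rho_*(\mathfrak{sl}(2))$ and checking the signature sign on each isotypic component separately.
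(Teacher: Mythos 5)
Your structural reduction is correct and matches the paper's: $\dim(\Im N)=1$ forces the $\rho_*(\mathfrak{sl}_2)$-module $H_{\CC}$ to consist of a single two-dimensional string $\mathrm{span}\{e,Ne\}$ with $e\in I^{m,m}$, $Ne\in I^{m-1,m-1}$, plus trivial summands; hence $X$ is a rank-one nilpotent with image $\CC\bar{u}$ and kernel $\bar{u}^{\perp}$, where $u=\exp(iN)e\in H^{m,m-1}_0$ and $Xu=-\bar{u}$. Your plan for (B) is essentially the paper's proof: complete $u$ to a unitary basis of $H^{m,m-1}_0$, note that $\exp(zX)$ fixes every basis vector except $u$ and sends $u\mapsto u-z\bar{u}$, compute $i\langle u-z\bar{u},\overline{u-z\bar{u}}\rangle=1-|z|^2$, and conclude by the description of $\calM_D$ in Proposition \ref{cs}. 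That part goes through.

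For (A) there is a genuine gap. All of your proposed candidates for $F_{\fix}$ (the Cayley transport of $F_0$, the fixed point of the parabolic on the relevant $\PP^1$) live on the $\rho(SL(2,\CC))$-orbit $\phi(\PP^1)$, and that cannot work: the unique fixed point of the unipotent group $\exp(zX)$ on $\phi(\PP^1)$ is $\phi(-i)$ (its fixed line in the string $\mathrm{span}\{u,\bar{u}\}$ is $\CC\bar{u}$), which violates the intersection-dimension conditions defining $C_0$; indeed one checks directly (e.g.\ in the $(1,1,1,1)$ type-I case) that $\phi(\PP^1)\cap C_0=\{F_0\}$, and $F_0$ itself is not fixed, since $\exp(X)F_0=\phi(i\infty)$ is a boundary point. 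You also cannot obtain $F_{\fix}$ from a fixed-point argument for a group acting on $C_0$, because $\exp(zX)$ does not preserve $C_0$ --- that it moves $C_0$ is precisely the content of (B) --- and the assertion that $X$ ``normalizes $\frakk_{0,\CC}$ modulo a nilpotent piece'' is not correct: $X\in\frakg^{-1,1}_0$ does not normalize $\frakk_{0,\CC}$. The paper's fixed point lies \emph{off} the $SL(2)$-orbit: choose $v\in H^{m-2,m+1}_0$ of norm one and let $g\in K$ be the real isometry swapping $u\leftrightarrow v$ and $\bar{u}\leftrightarrow\bar{v}$ while fixing their orthogonal complement; then $F_{\fix}=gF_0\in C_0$, and it is $\exp(X)$-stable because $u=gv$ now first appears in $gF^{m-2}_0$ while $Xu=-\bar{u}=-g\bar{v}$ already lies in $gF^{m+1}_0\subset gF^{m-2}_0$, and every $gF^{p}_0$ with $p\geq m-1$ pairs trivially with $\bar{u}$ and so is killed by $X$. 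The existence of the auxiliary Hodge component $H^{m-2,m+1}_0$ --- i.e.\ the non-classical hypothesis in force in this section --- is the essential ingredient, and it is exactly what your outline does not supply.
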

\begin{proof}
At first we write $X$ explicitly.
Considering the type of the limiting Hodge structure $(W(N),F)$, the case where $\dim{(\Im{N})}=1$ is possible only if the weight is $2m-1$ and $\dim{(\Gr^{W}_{2m})}=\dim{(\Gr^{W}_{2m-2})}=1$.
We then have a $\RR$-element $e$ in the $(m,m)$-component $I^{m,m}$ of the Deligne decomposition of $(W(N),\hat{F})$.
Here $X$ is given by
$$e\mapsto \frac{1}{2}(-e+iNe),\quad Ne\mapsto \frac{1}{2}(ie+Ne)=-iXe,\quad I^{p,q}\to 0\quad\text{for }p+q=2m-1.$$
We write $u=\exp{(iN)}e$.
Since $e\in \hat{F}^m$, $u \in \exp{(iN)}\hat{F}^m=F^m_{0}$.
Moreover, since $Ne\in\hat{F}^{m-1}$,
$$Ne=\exp{(iN)}Ne\in \exp{(iN)}\hat{F}^{m-1}=F^{m-1}_{0}.$$
Then 
$$\bar{u}=e-iNe=u-2iNe\in F^{m-1}_0.$$
Hence $u$ is in the $(m,m-1)$-component $H^{m,m-1}_0$ of the Hodge decomposition for $F_0$.
Here
\begin{align}\label{op-X}
Xu=-e+iNe=-\bar{u},\quad Xw=0\;\text{ if $\langle w,\bar{u}\rangle =0$}.
\end{align}

We show (A).
We denote by $\|\bullet \|$ the norm induced by the positive definite Hermitian form $\langle C_{F_0} \bullet ,\bar{\bullet}\rangle$ where $C_{F_0}$ is Weil operator for $F_0$.
Scaling $u$, we may assume $\| u\|=1$.
We take $v\in H^{m-2,m+1}_0$ such that $\| v\|=1$.
We define $g\in \Aut(H_{\CC})$ by
\begin{align*}
gu=v,\quad gv=u,\quad g\bar{v}=\bar{u},\quad g\bar{u}=\bar{v}, 
\end{align*}
and $gw=w$ if $w$ is vertical to $u,v,\bar{u}$ and $\bar{v}$ for $\langle \; ,\;\rangle $.
Then
$$gu=v=\overline{g\bar{u}}=\bar{g}u,\quad gv=u=\overline{g\bar{v}}=\bar{g}v.$$
Therefore $g$ is defined over $\RR$ and preserves the polarization $\langle \; ,\; \rangle$ i.e.\ $g\in G_{\RR}$.
Moreover $g\in K$ since $g$ preserves $H^{\even}$.\\
{\bf Claim.} $gF_0 \in C_0$ is a fixed point for $\exp{(X)}$.
\begin{proof}
Now $u=gv\in gH^{m-2,m+1}_0$.
By (\ref{op-X}) it is suffice to show that $Xu\in gF^{m-2}_0$.
In fact
$$Xu=-\bar{u}=-g\bar{v}\in gH^{m+1,m-2}_0.$$
\end{proof}

Next, we show (B).
We take a unitary basis $\{u_1,\ldots ,u_l\}$ of $H^{m,m-1}_0$.
We may assume $u_1=u$.
Then $\exp{(X)}u_j=u_j$ if $j\neq 1$ and $\exp{(zX)}u_1=u_1-z\bar{u}_1$.
Here 
$$i\langle\exp{(zX)}u_1,\overline{\exp{(zX)}u_1}\rangle=\|u_1\|^2-|z|^2\|u_1\|^2=1-|z|^2.$$
By Proposition \ref{cs}, $\exp{(zX)}C_0\subset D$ if and only if $|z|<1$.
\end{proof}
\begin{rem}
In the (1,1,1,1)-case, a type-I nilpotent $N$ satisfies $\dim(\Im{N})=1$, however other types do not.  
Above (\ref{X-N}), (A) and (B) are corresponding to the conditions (5.4), (5.6) and (5.5) of \cite[\S 5]{H} respectively. 
\end{rem}
\subsection{Non-triviality on some open sets}
Let $(\RR_{\geq 0}N,\exp{(\CC N)}F)$ be a nilpotent orbit.
Let $(\rho ,\phi)$ be the SL(2)-orbit associated to $(N,F)$.
Taking $F_0=\phi(i)$ as a base point, we have the base cycle $C_0$ and $X\in \frakg^{-1,1}_0$ as in (\ref{def_X}).
We define the subset
\begin{align*}
\calM (\varepsilon )=\{ \exp{(\alpha X)}C_0\; |\; 1-\varepsilon < \alpha <1  \} \subset \calM_{\check{D}}.
\end{align*}
for $0<\varepsilon$.
If $\dim{(\Im{N})}=1$, by Lemma \ref{fix} (B)
\begin{align*}
\exp{(\alpha X)}C_0\in \calM_{D}\text{ for }-1<\alpha <1,\quad \exp{(X)}C_0\notin \calM_{D}.
\end{align*}
Then $\calM (\varepsilon)$ is a nearby set of the boundary point $\exp{(X)}C_0\in \overline{\calM}_{D}$.   
\begin{prop}\label{no_loc_sec}
Let $U$ be an open set including the boundary point $(\sigma,\exp{(\sigma_{\CC})}\hat{F})$ in $\GsDs$ where $\sigma=\RR_{\geq 0}N$ with $\dim{(\Im{N})}=1$.
If there exists $0<\varepsilon<1$ such that $q(C)\subset U$ for any $C\in \calM(\varepsilon)$ and the quotient map $q:D\to \Gs\bs D$, then no section over the open set $U$ exists.
\end{prop}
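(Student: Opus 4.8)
The plan is to argue by contradiction, following the same strategy as in the proof of Theorem~\ref{no_global_sec} but localized to the open set $U$ and its preimage in $\Es$. Suppose a section $s\colon U\to\Es$ of the torsor $\pi$ exists over $U$. Restricting to the part of $U$ that lies inside $\Gs\bs D$ (which is nonempty, since $q(C)\subset U$ for $C\in\calM(\varepsilon)$ and each such $C$ meets $D$ by Lemma~\ref{fix}(B)), the log structure is trivial there, so $s$ gives a holomorphic map from $q^{-1}(U)\cap D$ into the fiber $\torus_\sigma\cong(\CC^*)^l$ with $l=\rank\Gs=1$; call this map $\Phi$. Concretely $\Phi$ is obtained by composing the section with the projection $\Es\cap(\torus_\sigma\times\check D)\to\torus_\sigma$, exactly as in the diagram in the proof of Theorem~\ref{no_global_sec}.

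Next I would exploit that the section extends continuously over the boundary point $(\sigma,\exp(\sigma_\CC)\hat F)\in U$: by the construction of the canonical map $\pi$ and the distinguished points $x_\tau$, a section through this boundary point forces $\Phi$ to converge to $0\in\toric_\sigma$ along any sequence in $D$ approaching that boundary point through $\Gs\bs D\hookrightarrow\GsDs$. Using (\ref{conv_nilp}) (i.e.\ \cite[Proposition 3.4.4]{ku}), the path $y\mapsto\exp(iyN)\hat F$ is such a sequence, so $\Phi(\exp(iyN)\hat F)\to 0$ as $y\to\infty$. More importantly for the local statement, I would use the SL(2)-orbit picture: by (\ref{X-N}) the same path is $\phi((1+y)i)=\exp\!\bigl(\tfrac{y}{2+y}X\bigr)\phi(i)$, so as $y\to\infty$ the parameter $\alpha=\tfrac{y}{2+y}\to 1^-$; thus for $y$ large the point lies in (a neighbourhood corresponding to) $\calM(\varepsilon)$, hence in $U$, so $\Phi$ is genuinely defined along the whole tail of this path and must tend to $0$.

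Now comes the point where non-Hermitian-symmetry enters. The map $\Phi$ is a holomorphic function on a nonempty open subset $\Omega$ of $D$ (the preimage in $D$ of $U\cap(\Gs\bs D)$) with values in $\CC^*$. I would like to conclude that $\Phi$ is constant, contradicting $\Phi\to 0$. The clean way is to observe that $\Omega$ can be taken to contain $\exp(\alpha X)C_0$ for all $\alpha\in(1-\varepsilon,1)$, i.e.\ a whole family of base cycles; since each base cycle $\exp(\alpha X)C_0$ is a \emph{compact} complex submanifold of $D$ contained in $\Omega$, the holomorphic function $\Phi$ is constant on each of them, and these cycles sweep out an open subset of $D$, forcing $\Phi$ locally constant near the limiting cycle $\exp(X)C_0$ — again incompatible with $\Phi\to 0$ along a path landing on that limit. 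Alternatively, one appeals directly to Theorem~\ref{const} once one checks $\Omega$ contains enough of $D$ (e.g.\ a $G_{\RR,0}$-translate large enough that any holomorphic function on it factors through the trivial bounded symmetric domain $D(G_{\RR,0},F_0)$); but the compact-cycle argument is more self-contained for a \emph{local} statement. Either way the conclusion is that $\Phi$ cannot both be holomorphic $\CC^*$-valued on such an $\Omega$ and converge to $0$ along the nilpotent-orbit path, so no section over $U$ exists.

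\medskip

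I expect the main obstacle to be the last paragraph: unlike the global Theorem~\ref{no_global_sec}, where $\Phi$ is defined on all of $D$ and Theorem~\ref{const} applies verbatim, here $\Phi$ is only defined on an open subset $\Omega\subset D$, and one must ensure $\Omega$ is large enough for a Liouville-type rigidity to bite. The compact base cycles $\exp(\alpha X)C_0$ for $\alpha\in(1-\varepsilon,1)$, which Lemma~\ref{fix}(B) places inside $D$ and the hypothesis on $U$ places inside $\Omega$, are precisely the tool that replaces the global constancy: they give compact positive-dimensional subvarieties in $\Omega$ on which any holomorphic function is constant, and their union accumulating at the boundary cycle $\exp(X)C_0$ is what produces the contradiction with convergence to $0$. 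Verifying that these cycles really lie in $q^{-1}(U)$ — i.e.\ unwinding the hypothesis $q(C)\subset U$ for $C\in\calM(\varepsilon)$ and matching it with the $\alpha$-parametrization — is the routine but essential bookkeeping step.
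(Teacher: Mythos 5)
Your setup matches the paper's: restrict the section to $U\cap(\Gs\bs D)$ to obtain a holomorphic $\Phi\colon q^{-1}(U)\cap D\to\CC^*$, force $\Phi(\exp{(iyN)}F_0)\to 0$ via (\ref{conv_nilp}), and use that $\Phi$ is constant on each compact cycle $\exp{(\alpha X)}C_0\in\calM(\varepsilon)$. But the step where you extract the contradiction has a genuine gap. You claim the cycles $\exp{(\alpha X)}C_0$, $\alpha\in(1-\varepsilon,1)$, ``sweep out an open subset of $D$, forcing $\Phi$ locally constant.'' They do not: this is a one-real-parameter family of compact cycles, so its union has real dimension at most $\dim_{\RR}C_0+1$, far below $\dim_{\RR}D$ (e.g.\ $3$ versus $8$ in the $(1,1,1,1)$-case). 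Moreover, even on a region genuinely foliated by compact cycles, constancy of $\Phi$ on each leaf would not make $\Phi$ locally constant; one needs something that relates the a priori different constant values on distinct cycles. Your fallback of invoking Theorem~\ref{const} directly also fails, since $\Phi$ is only defined on an open piece of $D$ while that theorem concerns holomorphic functions on all of $D_0$.

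The missing ingredient is exactly Lemma~\ref{fix}(A), which you never invoke. The point $F_{\fix}\in C_0$ satisfies $\exp{(tX)}F_{\fix}=F_{\fix}$, hence lies on \emph{every} cycle $\exp{(\alpha X)}C_0$ of the family. Constancy of $\Phi$ on each cycle through this common point gives
\[
\Phi(\exp{(iyN)}F_0)=\Phi\bigl(\exp{(\tfrac{y}{2+y}X)}F_0\bigr)=\Phi\bigl(\exp{(\tfrac{y}{2+y}X)}F_{\fix}\bigr)=\Phi(F_{\fix})
\]
for all $y>2(1-\varepsilon)/\varepsilon$, so $\Phi$ is constant along the tail of the path; this, not a Liouville-type rigidity, is what contradicts $\Phi(\exp{(iyN)}F_0)\to 0$. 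Both halves of Lemma~\ref{fix} are needed: (B) places the cycles inside $D$ (and, with the hypothesis on $U$, inside $q^{-1}(U)$), while (A) supplies the common point that ties the constant values on the different cycles together.
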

\begin{proof}
We assume there exists a local trivialization over $U$.
Similar to the proof of Theorem \ref{no_global_sec}, we have a section $U\to E_{\sigma}$ and the holomorphic map $\Phi :q^{-1}(U)\to \CC^*$ given by the following diagram
\begin{align*}
\xymatrix{
&U\ar@{}[d]|{\rotatebox{90}{$\subset$}}\ar@{->}[r]&\Es\ar@{}[d]|{\rotatebox{90}{$\subset$}}&\\
\Phi: q^{-1}(U)\ar@{->}[r] &U\cap (\Gs\bs D) \ar@{->}[r]&\Es\cap (\CC^*\times \check{D})\ar@{->}[r]&\CC ^*.
}
\end{align*}

By (\ref{X-N}) and the assumption, we have
$$q\l(\exp{\l(\frac{y}{2+y} X\r)}F_0\r)=q(\exp{(iyN)}F_0)\subset U$$
for 
$$1-\varepsilon <\frac{y}{2+y}<1,\quad \text{i.e. }\frac{2 (1-\varepsilon) }{\varepsilon}\leq y.$$
By (\ref{conv_nilp}), $\Phi (\exp{(iyN)}F_0)$ has to converge to $0$ as $y\to \infty$.

Now $\Phi$ is constant on the compact complex submanifold $C\in \calM (\varepsilon)$.
By Lemma \ref{fix}, we then have
\begin{align*}
\Phi (\exp{(iyN)}F_{0})&=\Phi \l(\exp{\l(\frac{y}{2+y}X\r)}F_0\r)=\Phi \l(\exp{\l(\frac{y}{2+y}X\r)}F_{\fix}\r)\\
&=\Phi \l(\exp{\l(\frac{y'}{2+y'}X\r)}F_{\fix}\r)=\Phi (\exp{(iy'N)}F_{0})
\end{align*}
for $y,y'>2(1-\varepsilon) /\varepsilon $.
This contradicts the convergence of $\Phi (\exp{(iyN)}F_0)$.
\end{proof}

\begin{rem}
Above $X$, $F_{\fix}$ and $F_0$ are corresponding to the notations $N'$, $F_{\infty}$ and $F_0$ in \cite[\S 5]{H} respectively.
\end{rem}

\subsection{The (1,1,1,1)-case}\label{1111sec}
The conditions (A) and (B) of Lemma \ref{fix} induce Proposition \ref{no_loc_sec}.
In our later work \cite{H2}, the condition (B) also plays important role to study the boundary structure.
We then expect that $\Gamma\bs D_{\Sigma}$ has a good properties if $\Sigma$ satisfies the conditions (A) or (B).
Therefore it is important to determine which cone satisfies (A) or (B).
As we saw in Example \ref{1111}, the types of nilpotent orbits in the (1,1,1,1)-case are well-known.
Type-I nilpotents satisfies (A) and (B).
We show that (A) or (B) does not hold in other types below.

Let $(\RR_{\geq 0}N ,\exp{(\CC N)}F)$ be a nilpotent orbit and let $(\rho,\phi)$ be the SL(2)-orbit associated $(N,F)$.
We can choose a unitary basis
$$u_3\in H^{3,0}_0,\quad u_2\in H^{2,1}_0,\quad \bar{u}_2\in H^{1,2}_0 ,\quad \bar{u}_3\in H^{0,3}_0$$
for the Hodge decomposition for $F_0=\phi(i)$.
Here the base cycle of $F_0$ is $C_0\cong U(2)/(U(1)\times U(1))\cong \PP^1$.
The isomorphism $\PP^1\stackrel{\sim}{\to}C_0\subset D$ is given by 
\begin{align}\label{basecycle}
&F_z^3=\text{span}_{\CC}\{ z\bar{u}_2+u_3\},\quad F_z^2=\text{span}_{\CC}\{z\bar{u}_2+u_3,u_2-z\bar{u}_3\},\\
&F_{\infty} ^3=\text{span}_{\CC}\{\bar{u}_2\},\quad F_{\infty}^2=\text{span}_{\CC}\{\bar{u}_2,\bar{u}_3\}.\nonumber
\end{align}

The properties (A) and (B) of Lemma \ref{fix} depend on $X\in\frakg^{-1,1}_0$.
For a type-I nilpotent, $X$ is given by  
$$\xymatrix{
\stackrel{(3,0)}{\bullet} &\stackrel{(2,1)}{\bullet}\ar@{->}[r]^X&\stackrel{(1,2)}{\bullet}&\stackrel{(0,3)}{\bullet}
}\quad (u_2\mapsto -\bar{u}_2\mapsto 0).$$
We determine the type of $X$ in the case for type-II and for type-III, and consider whether (A) or (B) holds or not.
\subsubsection{type-II}
\begin{prop}
If $N$ is of type-II, then (B) holds, however (A) does not hold.
\end{prop}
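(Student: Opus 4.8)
The plan is to write down the SL(2)-orbit data explicitly in the type-II case, identify the element $X\in\frakg^{-1,1}_0$, and then check conditions (A) and (B) against the explicit description of the base cycle $C_0\cong\PP^1$ given in \eqref{basecycle}. First I would recall that in type-II we have $N^2=0$ and $\dim(\Im N)=2$, with limiting mixed Hodge structure having $\Gr^W_4$ of Hodge type $(3,1)\oplus(1,3)$ and $\Gr^W_2$ of type $(2,0)\oplus(0,2)$. Using the SL(2)-orbit theorem conditions (S1)--(S5), the $sl_2$-representation on $H_\CC$ decomposes into two copies of the two-dimensional irreducible representation of $\mathfrak{sl}_2$: one spanned (after passing to the $\RR$-split $\hat F$ and the Deligne decomposition) by an $\RR$-vector $e_1\in I^{3,1}$ with $Ne_1\in I^{2,0}$, and one spanned by $e_2\in I^{1,3}$ with $Ne_2\in I^{0,2}$. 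Transporting to the base point $F_0=\phi(i)$ via $\exp(iN)$ exactly as in the proof of Lemma \ref{fix}, I would express $X=\tfrac12(iN-H+iN^+)$ in terms of $u_2\in H^{2,1}_0$ and $u_3\in H^{3,0}_0$; the key point is that here $X$ acts with source in the $(2,1)$ \emph{and} $(3,0)$ pieces, schematically
$$
\xymatrix{
\stackrel{(3,0)}{\bullet}\ar@{->}[r]^X &\stackrel{(2,1)}{\bullet}&\stackrel{(1,2)}{\bullet}\ar@{->}[r]^X&\stackrel{(0,3)}{\bullet}
}\quad (u_3\mapsto c_1\bar u_3,\ u_2\mapsto c_2\bar u_2),
$$
with nonzero constants $c_1,c_2$, rather than the single arrow $(2,1)\to(1,2)$ of type-I.

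Next I would verify (B). By Proposition \ref{cs}, $\exp(zX)C_0\subset D$ if and only if the Hermitian form $i^w\langle\bullet,\bar\bullet\rangle$ stays negative, resp. positive, on the transported isotropic planes. Plugging the explicit family $F_z^\bullet$ from \eqref{basecycle} into $\exp(zX)$ and computing $i^3\langle\exp(zX)v,\overline{\exp(zX)v}\rangle$ for $v$ running through a frame of $F^3$ and $F^2$, one finds (as in the type-I computation) that the relevant quantities become $1-|z|^2$ times the original ones, uniformly over the cycle; hence $\exp(zX)C_0\subset D$ exactly for $|z|<1$, so (B) holds. This is essentially the same linear-algebra computation as in Lemma \ref{fix}(B), now carried out in the rank-$4$ setting with the two-arrow $X$.

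Then I would show (A) fails. Suppose $F'\in C_0$ satisfies $\exp(X)F'=F'$. Writing $F'=F_z$ (or $F_\infty$) from \eqref{basecycle}, the condition $\exp(X)F_z^3=F_z^3$ forces $\exp(X)(z\bar u_2+u_3)$ to be proportional to $z\bar u_2+u_3$; but $\exp(X)$ fixes $\bar u_2$ and sends $u_3\mapsto u_3+c_1\bar u_3$ (since $Xu_3=c_1\bar u_3\neq 0$ in type-II, unlike type-I where $Xu_3=0$), so $\exp(X)(z\bar u_2+u_3)=z\bar u_2+u_3+c_1\bar u_3$, which is proportional to $z\bar u_2+u_3$ only if $c_1=0$ — contradiction; the case $F_\infty$ is handled the same way using $X\bar u_3$. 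Hence there is no fixed point in $C_0$, so (A) does not hold. The main obstacle is the first paragraph: pinning down the precise form of $X$ at the base point $F_0$ — i.e. correctly tracking the $\exp(iN)$-conjugation that moves the Deligne decomposition of the $\RR$-split limiting MHS to the Hodge decomposition at $\phi(i)$ — since everything in (A) and (B) hinges on whether $Xu_3$ vanishes, and this is exactly what distinguishes type-II from type-I.
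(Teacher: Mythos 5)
Your overall strategy is the paper's: make the $sl_2$-data explicit, compute $X$ at the base point $F_0=\phi(i)$, and test (A) and (B) against the parametrization (\ref{basecycle}) of $C_0$. But the explicit form you assign to $X$ is wrong, and the error propagates into your argument for (A). Since $X\in\frakg^{-1,1}_0$, it shifts Hodge type by $(-1,+1)$, so $Xu_3$ must lie in $H^{2,1}_0$ and $X\bar u_2$ in $H^{0,3}_0$; a map $u_3\mapsto c_1\bar u_3$ would be a $(-3,+3)$-shift and cannot occur (it also contradicts the arrow diagram you yourself drew). The correct action in type-II, obtained by transporting $v\in I^{3,1}$, $Nv\in I^{2,0}$ and their conjugates by $\exp(iN)$, is $u_3\mapsto u_2\mapsto 0$ and $\bar u_2\mapsto \bar u_3\mapsto 0$ (with $u_2=Xu_3$). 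Consequently $\exp(X)$ does \emph{not} fix $\bar u_2$, and your displayed identity $\exp(X)(z\bar u_2+u_3)=z\bar u_2+u_3+c_1\bar u_3$ is not the right computation. The correct one, $X(z\bar u_2+u_3)=z\bar u_3+u_2\notin F^3_z$ for every $z$, together with the analogous check at $F_\infty$, still rules out fixed points, so the conclusion of (A) survives, but not by the route you wrote.

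For (B) the gap is more serious: you assert that the relevant Hermitian forms ``become $1-|z|^2$ times the original ones,'' but this is exactly the point that has to be proved, and it is what separates type-II from type-III. The radius of the disc on which $\exp(zX)C_0\subset D$ is controlled by the norm of $Xu_3$ relative to $u_3$: one must show $\|Xu_3\|=\|u_3\|=1$, which the paper does via the orthogonality relations $\langle u_3,\bar u_2\rangle=\langle u_2,\bar u_3\rangle=0$, $\langle u_3,\bar u_3\rangle=i$ combined with $\langle v,\bar v\rangle=0$ (from $v\in\hat F^3$, $\bar v\in\hat F^1$ and (P1)). Without this normalization you would only obtain $1-c|z|^2$ for some constant $c>0$, and (B) holds only if $c\leq 1$; in the paper's type-III example the analogous constant is $3$ and (B) fails. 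So the step you wave through is precisely the nontrivial content of the proposition.
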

\begin{proof}
Let $v$ be a non-zero element in $I^{3,1}$ of the Deligne decomposition of $(W(N),\hat{F})$.
Then 
$$Nv\in I^{2,0},\quad \bar{v}\in I^{1,3},\quad N\bar{v}\in I^{0,2}.$$
We write $u_3=\exp{(iN)}v$.
Since $v\in\hat{F}^3$, $u_3\in F^3_0=H^{3,0}_0$.
Here the $sl_2$-triple is given by
\begin{align*}
&N^+Nv=v,\quad N^+N\bar{v}=\bar{v},\quad N^+v=N^+\bar{v}=0,\\
&Hv=v,\quad H\bar{v}=\bar{v},\quad HNv=-Nv,\quad HN\bar{v}=-N\bar{v}.
\end{align*}
Then we have
\begin{align*}
H^{2,1}_0\ni Xu_3=-v+iNv=-\exp{(-iN)}v.
\end{align*}
We write $u_2=Xu_3$.
Then $Xu_2=0$.
Moreover $\bar{u}_2\in H^{1,2}_0$, and
\begin{align*}
X\bar{u}_2=\bar{v}-iN\bar{v}=\bar{u}_3.
\end{align*}
Summarizing these, $X\in\frakg_0^{-1,1}$ is given by
\begin{align*}
\xymatrix{
\stackrel{(3,0)}{\bullet} \ar@{->}[r]^X&\stackrel{(2,1)}{\bullet}&\stackrel{(1,2)}{\bullet}\ar@{->}[r]^X&\stackrel{(0,3)}{\bullet}
}\quad (u_3\mapsto {u}_2\mapsto 0,\;\bar{u}_2\mapsto \bar{u}_3\mapsto 0).
\end{align*}
Since $X(z\bar{u}_2+u_3)=z\bar{u}_3+u_2$, $XF_z^3\not\subset F_z^3$ for $z\in \PP^1$ in (\ref{basecycle}).
Then there is no fixed point for $\exp{(X)}$ in $C_0$.

Next we show (B) holds.
Scaling $v$, we may assume $\|u_3\| =1$.\\
{\bf Claim.} $\| u_2\| =1$.
\begin{proof}
Let $a=\langle v,\bar{v}\rangle$, $b=\langle Nv,\bar{v}\rangle$, $c=\langle v,N\bar{v}\rangle$ and $d=\langle Nv,N\bar{v}\rangle$.  
Then by the orthogonality
\begin{align*}
&\langle u_3,\bar{u}_3\rangle =a+ib-ic+d=i,\quad \langle u_3,\bar{u}_2\rangle =-a-ib-ic+d=0,\\
&\langle u_2, \bar{u}_3\rangle =-a+ib+ic+d=0.
\end{align*}
Since $v\in \hat{F}^3$ and $\bar{v}\in \hat{F}^1$, $a=0$.
Therefore the simultaneous equation induces $d=0$, $b-c=1$ and
$\langle u_2,\bar{u}_2\rangle =a-ib+ic+d=-i$.
\end{proof}
Here $\{u_3,u_2,\bar{u}_3,\bar{u}_2\}$ is a unitary basis.
Since
\begin{align*}
&-i\langle\exp{(zX)}u_3,\overline{\exp{(zX)}u_3}\rangle=\|u_3\|^2-|z|^2\|u_2\|^2=1-|z|^2,\\
&-i\langle\exp{(zX)}\bar{u}_2,\overline{\exp{(zX)}\bar{u}_2}\rangle=\|u_2\|^2-|z|^2\|u_3\|^2=1-|z|^2,
\end{align*}
$\exp{(zX)}C_0\subset D$ if and only if $|z|<1$ by Proposition \ref{cs}.
\end{proof}
\subsubsection{type-III}
We give an example of type-III which satisfies neither (A) nor (B).
All nilpotent orbits of type-III are described in \cite{GGK} explicitly.
We consider the case where $a,b=1$ and $e,f,\pi=0$ in the notation of \cite[(I.C.2), (I.C.10)]{GGK}.
Let $H_{\ZZ}=\sum_{j=0}^3 \ZZ e_j$.
We write 
\begin{align*}
e_3=\begin{pmatrix}
1\\0\\0\\0
\end{pmatrix},\quad
e_2=\begin{pmatrix}
0\\1\\0\\0
\end{pmatrix},\quad
e_1=\begin{pmatrix}
0\\0\\1\\0
\end{pmatrix},\quad
e_0=\begin{pmatrix}
0\\0\\0\\1
\end{pmatrix},
\end{align*}
where the bilinear form is given by
\begin{align*}
\begin{pmatrix}
0& 0&0&-1\\
0&0&-1&0\\
0&1&0&0\\
1&0&0&0
\end{pmatrix}.
\end{align*}
Let 
\begin{align*}
N=\begin{pmatrix}
0&0&0&0\\
1&0&0&0\\
0&1&0&0\\
0&0&-1&0\\
\end{pmatrix},\quad \hat{F}^p=\span\{ e_3,\ldots ,e_p \} (3\geq p\geq 0).
\end{align*}
Then $N$ and $\hat{F}$ give a nilpotent orbit of type-III, where the limit mixed Hodge structure $(W(N),\hat{F})$ is $\RR$-split.

The $sl_2$-triple of the SL(2)-orbit associated to this nilpotent orbit is given by
\begin{align*}
H=\begin{pmatrix}
3&0&0&0\\
0&1&0&0\\
0&0&-1&0\\
0&0&0&-3\\
\end{pmatrix},\quad
N^+=\begin{pmatrix}
0&3&0&0\\
0&0&4&0\\
0&0&0&-3\\
0&0&0&0\\
\end{pmatrix}.
\end{align*}
Then 
\begin{align*}
X=\frac{1}{2}\begin{pmatrix}
-3&3i&0&0\\
i&-1&4i&0\\
0&i&1&-3i\\
0&0&-i&3\\
\end{pmatrix}.
\end{align*}

\begin{prop}
For the above example, both (A) and (B) do not hold.
\end{prop}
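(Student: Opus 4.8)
The plan is to make the two conditions (A) and (B) completely explicit in a unitary Hodge basis at $F_0=\phi(i)$ and then to check, by a short computation, that each one fails.

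\emph{Coordinates.} I would first note that $\exp(iyN)\phi(0)=\phi(iy)$ together with $\phi(0)=\hat F$ gives $F_0=\exp(iN)\hat F$. For the given $(N,\hat F)$ the $\RR$-split limit mixed Hodge structure has Deligne bigrading $I^{p,p}=\CC e_p$ ($0\le p\le 3$), so $F_0^p=\exp(iN)\,\mathrm{span}_\CC\{e_3,\dots,e_p\}$ and $\overline{F_0^q}=\exp(-iN)\,\mathrm{span}_\CC\{e_3,\dots,e_q\}$; intersecting these recovers unit vectors $u_3\in H^{3,0}_0$, $u_2\in H^{2,1}_0$, $\bar u_2\in H^{1,2}_0$, $\bar u_3\in H^{0,3}_0$ for the Hodge decomposition at $F_0$, explicitly in the $e_j$-basis (for instance $u_3\propto\exp(iN)e_3$ and $u_2\propto\exp(iN)(e_3-\tfrac{2i}{3}e_2)$). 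Since the orbit is of type-III the representation $\rho$ is the irreducible $4$-dimensional one, so $X$ is a single Jordan block on $H_\CC$; being in $\frakg^{-1,1}_0$ it carries the chain $H^{3,0}_0\to H^{2,1}_0\to H^{1,2}_0\to H^{0,3}_0\to 0$, say $Xu_3=au_2$, $Xu_2=b\bar u_2$, $X\bar u_2=c\bar u_3$, $X\bar u_3=0$ with $a,b,c\ne 0$. Feeding in the given matrix of $X$ one finds in particular $\ker X=\CC\bar u_3=H^{0,3}_0$ and $b=-2$ (that is, $Xu_2=-2\,\bar u_2$).

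\emph{(A) fails.} For $F\in\check D$ one has $\exp(X)F=F$ if and only if $X$ preserves the flag $F$. Now $\check D$ is the full symplectic flag variety $G_\CC/B$ and $X$ is a regular nilpotent of $\frakg_\CC$ (a single Jordan block on $H_\CC$), hence $X$ lies in a unique Borel subalgebra, so there is exactly one such $F$, and its line $F^3$ is forced to be the unique $X$-invariant line $\ker X=\CC\bar u_3$. But by (\ref{basecycle}) the lines $F^3$ of points of $C_0$ are precisely $\CC(z\bar u_2+u_3)$ for $z\in\CC$ together with $\CC\bar u_2$, none of which equals $\CC\bar u_3$. Hence no point of $C_0$ is fixed by $\exp(X)$, i.e.\ (A) fails. (Equivalently, in the style of the type-II argument: $X(z\bar u_2+u_3)=a u_2+zc\,\bar u_3\notin\CC(z\bar u_2+u_3)$ for every $z\in\CC$, and $X\bar u_2=c\,\bar u_3\notin\CC\bar u_2$, so $XF^3_z\not\subset F^3_z$ for all $z\in\PP^1$.)

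\emph{(B) fails.} Writing $\exp(zX)C_0=C_{V_z,W_z}$ with $V_z=\exp(zX)H^{\even}_0$ and $H^{\even}_0=\CC u_2\oplus\CC\bar u_3$, Proposition~\ref{cs} shows that $\exp(zX)C_0\subset D$ requires in particular that $i^w\langle\bullet,\bar{\bullet}\rangle$ be negative definite on $V_z$. From $Xu_2=-2\bar u_2$, $X^2u_2=bc\,\bar u_3\in\CC\bar u_3$, $X^3u_2=0$ and $X\bar u_3=0$ one gets $V_z=\mathrm{span}_\CC\{u_2-2z\bar u_2,\ \bar u_3\}$, and using the Hodge-orthogonality of the four lines under $\langle\,,\,\rangle$ and the sign of $i^w\langle\bullet,\bar{\bullet}\rangle$ on $H^{p,q}_0$ (positive for $p$ odd, negative for $p$ even) one checks that in this basis the form is $\mathrm{diag}\big(-(1-4|z|^2),\,-1\big)$. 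Thus $V_z\ll 0$ holds exactly when $|z|<\tfrac12$, so for any $z_0$ with $\tfrac12<|z_0|<1$ we have $\exp(z_0X)C_0\not\subset D$ (the extra condition on $W_z$ only shrinks the range further and is not needed); hence (B) fails.

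\emph{Where the work is.} Nothing here is conceptually hard. The only real labor is the bookkeeping of the first step — translating faithfully between the $e_j$-basis of the limit mixed Hodge structure and the unit Hodge basis at $F_0$, and reading the structure constants (equivalently, the single number $\|Xu_2\|=2$, i.e.\ $|b|=2>1$) off the given matrix of $X$. Once that is in hand, (A) and (B) follow formally as above.
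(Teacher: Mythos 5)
Your argument is correct, and it rests on the same computational core as the paper's proof: both reduce everything to the chain structure of $X$ on the unitary Hodge basis $\{u_3,u_2,\bar u_2,\bar u_3\}$ at $F_0=\exp(iN)\hat F$ (your structure constant $Xu_2=-2\bar u_2$ agrees with the paper's chain $u_3\mapsto\sqrt3\,u_2\mapsto-2\sqrt3\,\bar u_2\mapsto-6\,\bar u_3\mapsto0$), and for (A) your parenthetical check that $XF_z^3\not\subset F_z^3$ for every $z\in\PP^1$ is exactly the paper's argument. You deviate in two minor but genuine ways. For (A) you add a conceptual explanation: $X$ is a regular nilpotent of $\mathfrak{sp}_4$ (single Jordan block), hence lies in a unique Borel and fixes exactly one flag in $\check D$, whose line is $\ker X=\CC\bar u_3$, which is not an $F^3$-line of $C_0$; this makes clear \emph{why} the direct verification had to succeed and would apply to any type-III orbit, whereas the paper only records the explicit computation. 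For (B) you test negative definiteness of $i^w\langle\bullet,\bar\bullet\rangle$ on $V_z=\exp(zX)H^{\even}$ via Proposition~\ref{cs} and find failure for $|z|\ge1/2$, while the paper tests positivity on the $(3,0)$-line $\exp(zX)\CC\bar u_2$ of the single point $F_\infty\in C_0$ and finds failure for $|z|\ge1/\sqrt3$; the two radii differ because these are the two different necessary conditions ($V\ll0$ versus $W\gg0$) in Proposition~\ref{cs}, but either one refutes (B) since both thresholds are less than $1$. I verified your diagonalization $\mathrm{diag}\bigl(-(1-4|z|^2),-1\bigr)$ and your coordinate identifications (e.g.\ $u_2\propto\exp(iN)(e_3-\tfrac{2i}{3}e_2)$) against the paper's explicit vectors; they are consistent.
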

\begin{proof}
Let
\begin{align*}
u_3=\frac{\sqrt{3}}{2}\exp{(iN)}e_3=\frac{\sqrt{3}}{12}\begin{pmatrix}
6\\6i\\-3\\i
\end{pmatrix}.
\end{align*}
Then $\|u_3\|=1$.
Now
\begin{align*}
&Xu_3=\frac{\sqrt{3}}{4}\begin{pmatrix}
-6\\-2i\\-1\\i
\end{pmatrix},\quad
X^2u_3=\frac{\sqrt{3}}{2}\begin{pmatrix}
6\\-2i\\1\\i
\end{pmatrix}=-2\overline{Xu_3},\\
&X^3u_3=\frac{\sqrt{3}}{2}\begin{pmatrix}
-6\\6i\\3\\i
\end{pmatrix}=-6\bar{u}_3.
\end{align*}
Here $\| Xu_3\| =3$.
Letting $u_2=\frac{1}{\sqrt{3}}Xu_3$, we then have a unitary basis $\{u_3,u_2,\bar{u}_3,\bar{u}_2\}$.
$X$ gives the map
\begin{align*}
\xymatrix{
\stackrel{(3,0)}{\bullet} \ar@{->}[r]^X&\stackrel{(2,1)}{\bullet}\ar@{->}[r]^X&\stackrel{(1,2)}{\bullet}\ar@{->}[r]^X&\stackrel{(0,3)}{\bullet}
}\\
(u_3\mapsto \sqrt{3}u_2 \mapsto -2\sqrt{3}\,\overline{u_2}\mapsto -6\,\overline{u_3}\mapsto 0).
\end{align*}
Then $XF^3_z\not\subset F^3_z$ for $z\in\PP^1$ in (\ref{basecycle}), and so there is no fixed point in $C_0$.
Moreover, for $\bar{u}_2\in F^3_{\infty}$
\begin{align*}
-i\langle\exp{(zX)}\bar{u}_2,\overline{\exp{(zX)}\bar{u}_2}\rangle=\|u_2\|^2-3|z|^2\|u_3\|^2=1-3|z|^2.
\end{align*}
Then $\exp{(zX)}C_0\not\subset D$ for $|z|\geq 1/\sqrt{3}$.
\end{proof}
\section*{Acknowledgment}
This work was done during a visit of the author to the Universit\'{e} du Qu\'{e}bec \`{a} Montr\'{e}al in August 2011 and the Johns Hopkins University in the activity of JAMI in September 2011. The author is grateful for the hospitality and the support.
The author is thankful to professors Patrick Brosnan, Steven Lu, Gregory Pearlstein and Steven Zucker for their valuable advice and warm encouragement. 
This research is supported by National Science Council of Taiwan.

\end{document}